\newtheorem{definition}{Definition}[section]
\newtheorem{theorem}{Theorem}[section]
\newtheorem{proposition}{Proposition}[section]
\newtheorem{corollary}{Corollary}[section]
\newtheorem{remark}{Remark}[section]
\begin{document}

\title{The Jones polynomial of collections of open curves in $3-$space}

\author{Kasturi Barkataki $^1$ \and Eleni Panagiotou$^2$}

\date{%
    $^1$Department of Mathematics and SimCenter, University of Tennessee at Chattanooga, $613$ McCallie Ave, Chattanooga, $37403$, TN, USA.\\%
    $^2$Department of Mathematics and SimCenter, University of Tennessee at Chattanooga, $613$ McCallie Ave, Chattanooga, $37403$, TN, USA.\\[2ex]%
    \today
}

\maketitle

\begin{abstract}
Measuring the entanglement complexity of collections of open curves in 3-space has been an intractable, yet pressing mathematical problem, relevant to a plethora of physical systems, such as in polymers and biopolymers. In this manuscript, we give a novel definition of the Jones polynomial that generalizes the classic Jones polynomial to collections of open curves in 3-space. More precisely, first we provide a novel definition of the Jones polynomial of linkoids (open link diagrams) and show that this is a well-defined single variable polynomial that is a topological invariant, which, for link-type linkoids, it coincides with that of the corresponding link. Using the framework introduced in \cite{Panagiotou2020b}, this enables us to define the Jones polynomial of collections of open and closed curves in 3-space. For collections of open curves in 3-space, the Jones polynomial has real coefficients and it is a continuous function of the curves' coordinates. As the endpoints of the curves tend to coincide, the Jones polynomial tends to that of the resultant link. We demonstrate with numerical examples that the novel Jones polynomial enables us to characterize the topological/geometrical complexity of collections of open curves in 3-space for the first time.
\end{abstract}

\section{Introduction}

Many physical systems, such as polymers and biopolymers, textiles, and chemical compounds are composed by filamentous structures, that can be modeled by mathematical curves in 3-space, whose entanglement complexity determines their mechanical properties and function \cite{Arsuaga2005,Ricca2007,Taylor1974,Sulkowska2012,Qin2011,Panagiotou2019,Liu2018,Edwards1967}. Measuring multi-chain entanglement in such systems has remained an open problem for many decades \cite{Edwards1967,Edwards1968,DeGennes1974}. In this paper we introduce the first rigorous measure of complexity of collections of open curves in 3-space, via a traditional invariant of knots and links, the Jones polynomial. More precisely, the novel Jones polynomial that we introduce here generalizes the traditional Jones polynomial, so that it is applicable to collections of open curves in 3-space and gives a continuous measure of linking complexity which reduces to a topological invariant for closed curves.   

Collections of simple closed curves in 3-space (links) can be classified upon deformations without allowing cutting and pasting (topological equivalence). Topological invariants are functions defined on links that are invariant under Reidemeister moves and can be used to characterize the complexity of simple closed curves in 3-space. The notion of topological equivalence however is not useful for systems of open curves in 3-space, since any collection of open curves is topologically equivalent to any other. Instead of topological invariants, to characterize the topological complexity of open curves in 3-space, we seek measures of topological complexity that are continuous functions in the space of configurations. Until recently, the only measure of topological entanglement that could be applied to one or two open curves in 3-space to give a continuous measure of single or pairwise topological complexity, was the Gauss linking integral \cite{Gauss1877,Panagiotou2011, Panagiotou2013b,Panagiotou2014,Panagiotou2015}.  It was not until \cite{Panagiotou2020b}, where the necessary framework to define the Jones polynomial of a single open curve in 3-space was introduced, based on the notion of knotoids (open ended knot \textit{diagrams}) and their Jones polynomial. The Jones polynomial of an open curve is a polynomial with real coefficients that are continuous functions of the curve coordinates. This new framework also allowed to define Vassiliev measures of open curves in 3-space and to derive closed formulas for the second Vassiliev measure of single open curves in 3-space \cite{Panagiotou2021}. These advances led to immediate applications in materials and biology to obtain novel understanding of such physical systems, rigorously and without any closure scheme for the first time \cite{Herschberg2021,Smith2022,Wang2022}.   However, extending the Jones polynomial to collections of open curves in 3-space, has not been possible, even though one would think it would be straightforward, as the definition of the classical Jones polynomial applies to both knots and links. The reason for this is that an appropriate definition of the Jones polynomial of linkoids (open ended link \textit{diagrams}) is missing. This will be fully addressed in this manuscript.

The theory of knotoids was introduced by V.Turaev \cite{Turaev2012} as a means to study parts of knot diagrams with the aim to characterize knot complexity. Knotoids on a surface are open-ended knot diagrams and many analogous ideas of classical knot theory are applicable to the study of knotoids \cite{Gugumcu2017,Gugumcu2017b,Gugumcu2019, Manouras2021}. The Jones polynomial of knotoids follows from that of knots with a simple modification. The modification relies on assigning a value to states containing an open arc in the bracket expansion. The notion of knotoids can be naturally extended to multi-component cases, which we call linkoids (these can be seen as open link diagrams). Even though the definition of the Jones polynomial of knots extends naturally to that of links, the definition of the Jones polynomial of linkoids does not follow directly from that of knotoids and remains elusive. The difficulty consists on the fact that the states resulting from linkoids may contain several non-intersecting open arcs, which may connect end-points of different components, and it has been unclear how to deal with those in the polynomial. Very recently, in \cite{Neslihan2022} two definitions of the Jones polynomial were introduced to classify linkoids. Those definitions however, do not satisfy an important property, which prevents them to be used in giving an appropriate definition of the Jones polynomial of open curves in 3-space, as we will see later. In this manuscript, we will provide a novel definition of the bracket polynomial that can properly account for those states and which satisfies an important desired property, which enables the definition of a single variable Jones polynomial of linkoids and the Jones polynomial of collections of open curves in 3-space.



More precisely, in this paper we provide the first rigorous definition of the Jones polynomial of linkoids which has the property that the Jones polynomial of a link-type linkoid is equal to the Jones polynomial of the corresponding link. We use the framework introduced in \cite{Panagiotou2020b} to define the Jones polynomial of a collection of open curves in 3-space \cite{Panagiotou2020b}. Namely, any projection of a collection of open curves in 3-space to a plane corresponds to a linkoid diagram, whose Jones polynomial is now well defined. We define the Jones polynomial of the collection of open curves in 3-space as the average of the Jones polynomials in a projection over all possible projection directions. We prove that for a collection of open curves in 3-space, the newly introduced Jones polynomial is a polynomial with real coefficients which are continuous functions of the curve coordinates. We also prove that, as the endpoints of the curves move to coincide and form a link in 3-space, the Jones polynomial of the collection of open curves tends to that of the resulting link. We stress that the latter is possible only via the Jones polynomial of linkoids definition we introduce in this manuscript. The Jones polynomial introduced here provides a holistic definition of a Jones polynomial which applies to open and closed curves of single or multi-component systems. Our theoretical results are accompanied by an illustrative example of an open Borromean ring in 3-space. 

The contents of this paper are summarised as follows: In Section \ref{Linkoids}, we give the definitions for linkoid diagrams and linkoids and also make precise the notion of link-type linkoids. In Section \ref{JP_linkoids}, we provide all the necessary framework and then define the Jones polynomial of linkoids and we study its properties. In Section \ref{open curves}, we give the definition of the Jones polynomial of a collection of open curves in 3-space and we study its properties. Finally, in Section \ref{conclusion}, we present the conclusions of our results.

\section{Linkoids}
\label{Linkoids}

As mentioned in the previous section, the Jones polynomial of collections of open curves in 3-space will be defined via projections of the curves, which can be seen as linkoids. Even though we think of linkoids as projections of open curves, linkoids have been typically studied as diagrammatic objects. In this section, we present the definitions of linkoids and linkoid diagrams, originally defined in \cite{Gugumcu2017,Gugumcu2017b,Gugumcu2019, Manouras2021}, as well as the definition of link-type linkoids.

\begin{definition}\label{linkoid}(Linkoid Diagram)
Let $\Sigma$ be a surface. In this manuscript, we will consider  $\Sigma=S^2 = \mathbb{R}^2 \cup \infty$. A linkoid diagram $L$ with $n \in \mathbb{N}$ components in $\Sigma$ is a generic immersion of $[0,1] \times [0,1] \times \cdots \times [0,1]$ (upto $n$ times) in the interior of $\Sigma$ whose only singularities are transversal double points endowed with over/undercrossing data. These double points are called the crossings of $L$. The immersion of each $[0,1]$ is referred to as a component of the linkoid diagram and the images of $0$ and $1$ under this immersion are called the leg and the head of the component, respectively. These two points are distinct from each other and from the double points; they are called the endpoints of the component. The diagram $L$ has a total of $2n$ endpoints. A natural orientation is assigned for each $l_i$ from the leg to the head. 

For a linkoid diagram $L$ with $n$ components, we may introduce a convention to index all the legs by odd numbers $i \in \{1, 3, \cdots, 2n-1 \}$, and  the corresponding heads by even numbers $i+1 \in \{2, 4, \cdots, 2n \}$. Thus, a component of $L$ is denoted by $l_{\left(2j-1, 2j\right)} \quad $ such that $j \in \{1, 2 , \cdots, n \} \quad$ where the index $\quad 2j-1 \quad$ indicates the leg and the index $\quad 2j \quad$ indicates the head of the component. See Figure \ref{kdiags1} for examples of labeled linkoids.
\end{definition}

\begin{figure}[h]
    \centering
    \includegraphics[scale=0.12]{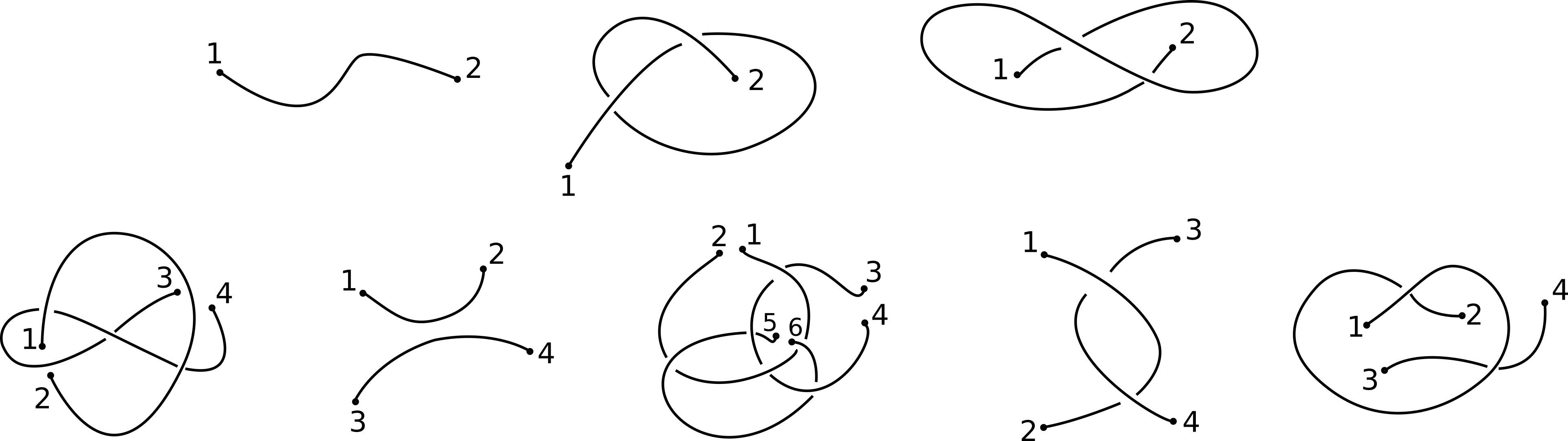}
    \caption{Examples of Knotoid and Linkoid Diagrams.}
    \label{kdiags1}
\end{figure}

\begin{definition}(Linkoid)
A linkoid is an equivalence class of the linkoid diagrams up to the equivalence relation induced by the three Reidemeister moves and isotopy. 
The three Reidemeister moves, denoted by $\Omega_1, \Omega_2, \Omega_3$, see Figure \ref{moves}(a), are defined on
linkoid diagrams and referred to as $\Omega$-moves. It is forbidden to pull the strand adjacent to an endpoint
over/under a transversal strand as shown in Figure \ref{moves}(b). These moves are called forbidden linkoid moves, and denoted by $\Phi_+$ and $\Phi_-$ , respectively.
\end{definition}

\begin{figure}[h]
    \centering
    \includegraphics[scale=0.3]{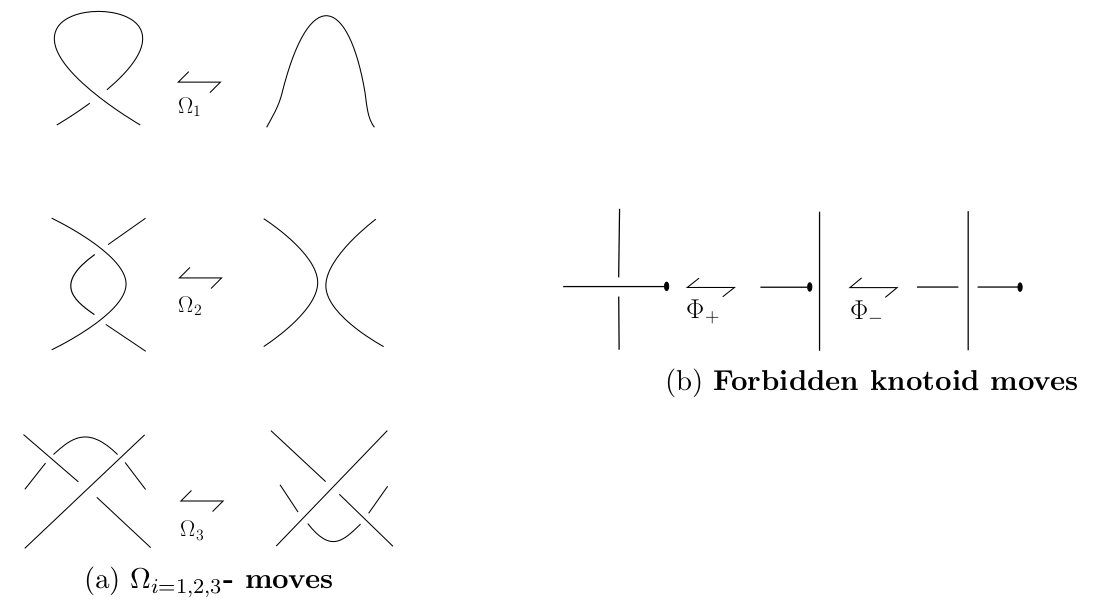}
    \caption{The $\Omega$-moves and Forbidden moves on a Linkoid Diagram. (See \cite{Gugumcu2017}). Note that these arcs are considered as parts of a larger diagram. }
    \label{moves}
\end{figure}

\begin{definition}\label{linktype}(Link-type Linkoid)
A linkoid is said to be of link-type if there exists a diagram in its equivalence class in which it is possible to draw a closure arc connecting the pair of endpoints (from the head to the leg) per component without introducing additional double points (crossings) to the diagram or between the closure arcs. A linkoid that is not of link-type, is called a proper linkoid.
\end{definition}

\begin{remark}
For knotoids (linkoids with 1 component), another way to distinguish between proper knotoids and knot-type knotoids is by checking whether there exists a diagram in which the two endpoints lie in the same region. This definition is consistent with Definition \ref{linktype}. Notice that, if for any diagram of the knotoid it is impossible to draw a closure arc without introducing new crossings, it follows that the endpoints must lie in different regions of the diagram. The converse is also true.
\end{remark}

\begin{remark}
Note that any pure braid (In a pure braid, the beginning and the end of each strand are in the same position) can be thought of as a link-type linkoid. Indeed, the closure arcs of a pure braid do not intersect any other arc, as do the closure arcs of a linkoid.

\end{remark}

\section{The Jones polynomial of Linkoids}
\label{JP_linkoids}

In this Section we will define the Jones polynomial of linkoids. As we mentioned in the Introduction, this will be later seen as the Jones polynomial of a projection of a collection of open curves in 3-space. However, it is also of interest in the study of linkoids in general. We will define the Jones polynomial of linkoids as the normalized Kauffman bracket polynomial of linkoids. 
Let us start with the classic smoothing skein relations, shown in Equation \ref{bkt_knotoids}. 

\begin{equation}
\langle\raisebox{-10pt}{\includegraphics[width=.05\linewidth]{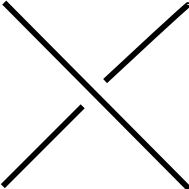}}\rangle=A\langle\raisebox{-10pt}{\includegraphics[width=.05\linewidth]{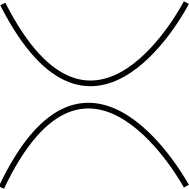}}\rangle+A^{-1}\langle\raisebox{-10pt}{\includegraphics[width=.05\linewidth]{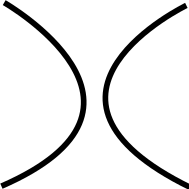}}\rangle,\hspace{0.5cm}\langle L\cup \bigcirc\rangle=(-A^2-A^{-2})\langle L\rangle.
\label{bkt_knotoids}
\end{equation}

Upon recursively smoothing a linkoid diagram using these relations, we obtain states with no crossings (see Figure \ref{borr1state}). 

\begin{definition}(State of a Linkoid Diagram)
A state $S$ of a linkoid diagram $L$ with $n$ components is an assignment of a choice of smoothing at each crossing. This results in a crossingless diagram with disjoint circles and $n$ long segments. Each long segment is labelled by the two endpoints that it connects. See Figure \ref{borr1state} for an illustrative example of a linkoid diagram and a corresponding state.
\label{def_st_linkoid}
\end{definition}

\begin{figure}[h]
    \centering
    \includegraphics[scale=0.15]{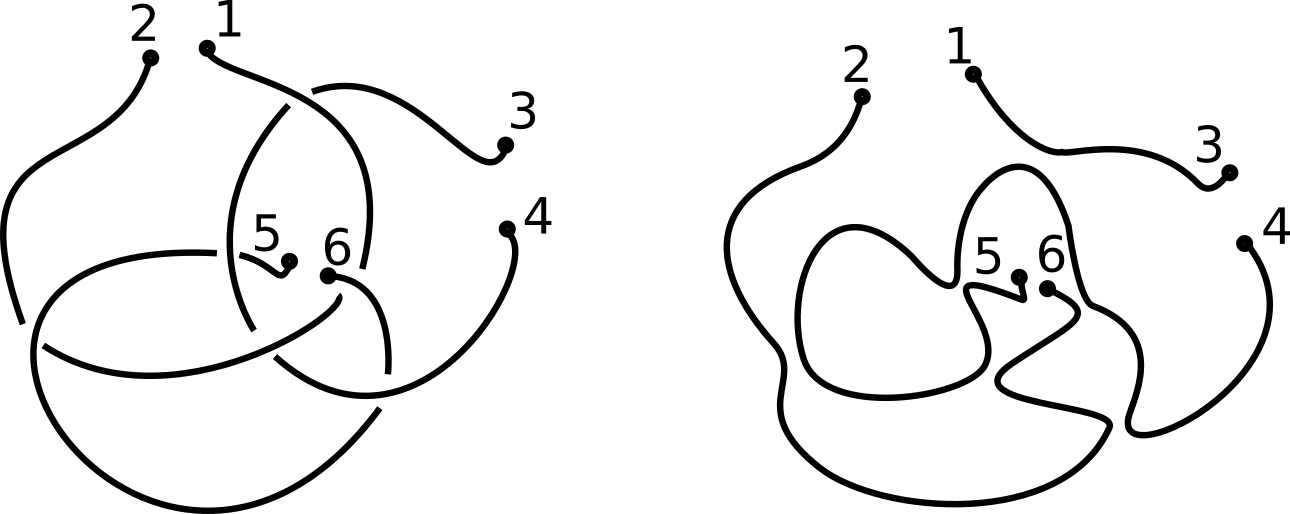}
    \caption{(Left) A linkoid diagram with 3 components, $l_{(1,2)}, l_{(3,4)}$ and $l_{(5,6)}$. (Right) One of the 64 possible states in the state sum expansion of the diagram. Clearly, this state is crossingless and contains 3 disjoint segments, namely $(1,3), (2,6)$ and $(4,5)$. Note that even though the number of long segments is the same as the number of components of the original linkoid, there has been a rearrangement in the pairing of endpoints per segment.}
    \label{borr1state}
\end{figure}

\begin{figure}[h]
    \centering
    \includegraphics[scale=0.15]{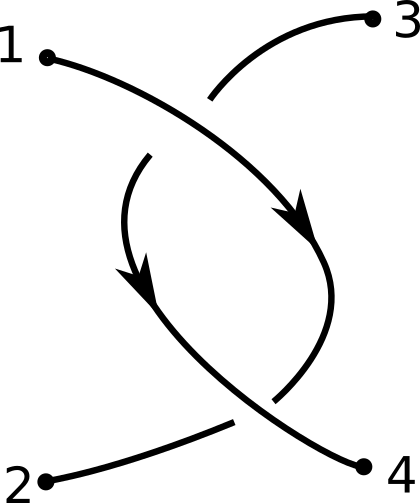}
    \caption{An oriented link-type linkoid with 2 components, corresponding to the Hopf link.  }
    \label{oriented_hopf_type}
\end{figure}

\noindent \textbf{Example 1 :}
To motivate the relations that will enable us to evaluate the bracket polynomial on these states, we will focus on a particular example, that of the linkoid diagram in Figure \ref{oriented_hopf_type}. Note that this linkoid diagram represents a link-type linkoid, corresponding to the Hopf link.  It is natural to require that the bracket (and consequently the Jones) polynomial of the linkoid in this example reflects the entanglement present in the Hopf link. Notice that the open Hopf-type linkoid, is equivalent to a Hopf link with two $\epsilon>0$ infinitesimal segments removed. Thus, it is natural to require that the bracket polynomial of the Hopf-type linkoid is in fact equal to that of the Hopf link, which is equal to $\left \langle \raisebox{-5pt}{\includegraphics[width=.05\linewidth]{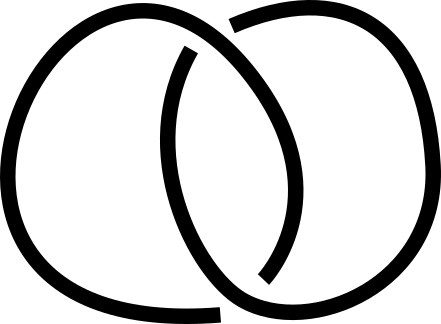}} \right \rangle =-A^4-A^{-4}$. This would in fact generalize the corresponding property of knot-type knotoids for the bracket (or Jones). Moreover, as we will see later, this not only a natural, but a necessary property to satisfy so that the Jones polynomial of collections of open curves in 3-space tends to that of the corresponding link, when the curves close, an essential desired property for measuring multi-curve complexity of curves in 3-space.

Consider the oriented linkoid consisting of 2 components as shown in Figure \ref{oriented_hopf_type}. By smoothing the crossings in the linkoid diagram using the bracket polynomial definition for knotoids (See Equation \ref{bkt_knotoids}), we get the following expansion :

\begin{equation}
\begin{split}
    \left \langle \raisebox{-12pt}{\includegraphics[width=.06\linewidth]{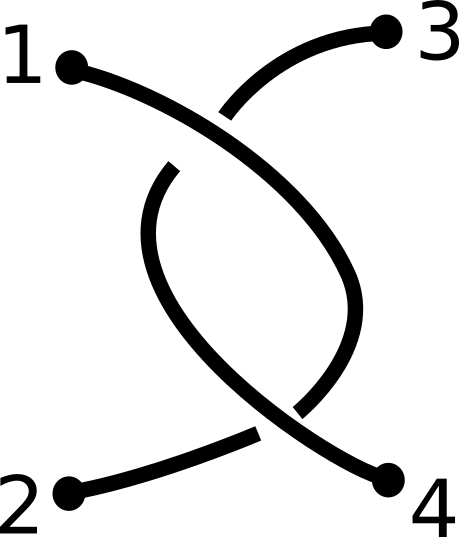}} \right \rangle &= A^2 \left \langle \raisebox{-12pt}{\includegraphics[width=.06\linewidth]{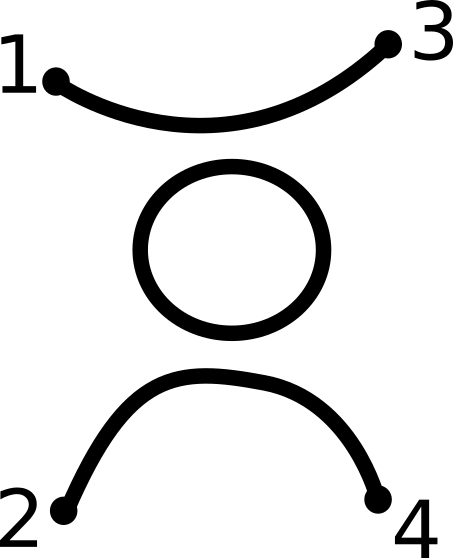}}\right \rangle + \left \langle \raisebox{-12pt}{\includegraphics[width=.06\linewidth]{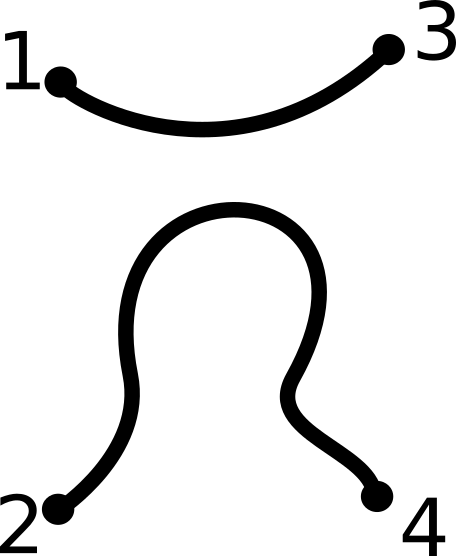}} \right \rangle + \left \langle \raisebox{-12pt}{\includegraphics[width=.06\linewidth]{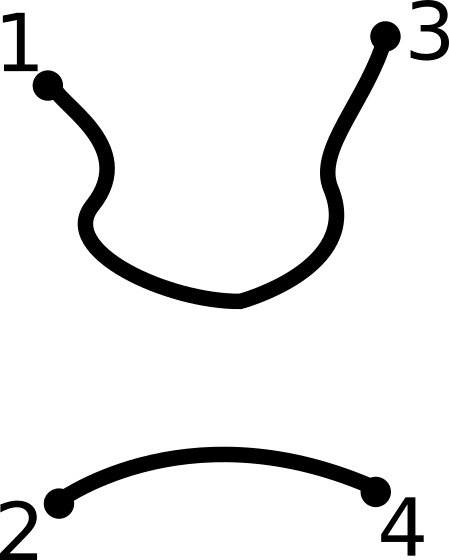}}\right \rangle + A^{-2} \left \langle \raisebox{-12pt}{\includegraphics[width=.06\linewidth]{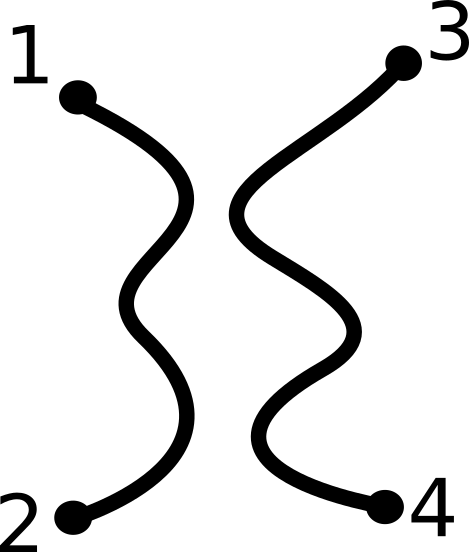}}\right \rangle\\
    &=A^2 d \left \langle \raisebox{-12pt}{\includegraphics[width=.06\linewidth]{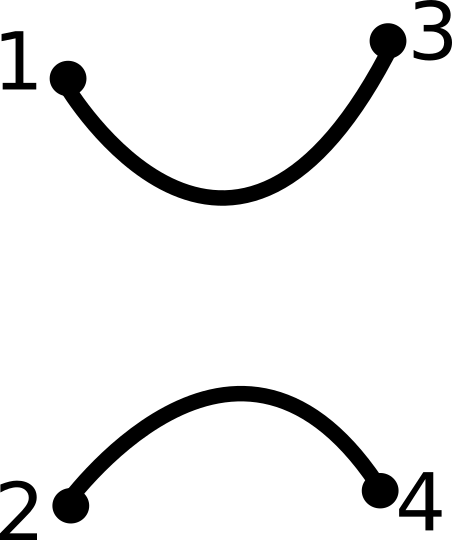}} \right \rangle + 2 \left \langle \raisebox{-12pt}{\includegraphics[width=.06\linewidth]{fig/h6.png}} \right \rangle + A^{-2} \left \langle \raisebox{-12pt}{\includegraphics[width=.06\linewidth]{fig/h5.png}} \right \rangle\\
    &=(A^2 d + 2) \left \langle \raisebox{-12pt}{\includegraphics[width=.06\linewidth]{fig/h6.png}} \right \rangle + A^{-2} \left \langle \raisebox{-12pt}{\includegraphics[width=.06\linewidth]{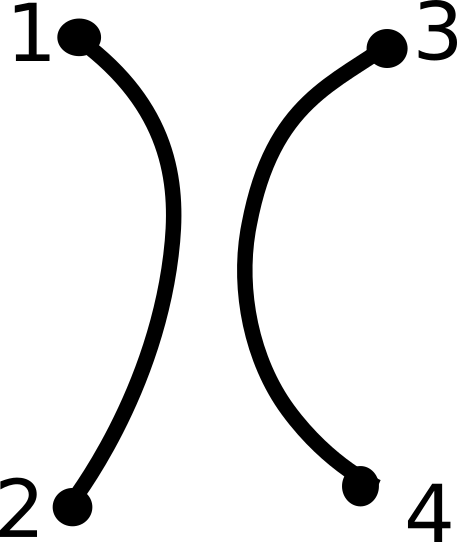}} \right \rangle\\
\end{split}
\label{hopf_type_bkt_1}
\end{equation}

\noindent where $d=-A^2-A^{-2}$.\\

The final expression in Equation \ref{hopf_type_bkt_1} is a summation of bracket polynomials of states of the linkoid diagram with two long segments each. Notice that the Kauffman bracket polynomial of knotoids can evaluate the bracket of an open arc, simply by setting \quad $\langle\includegraphics[width=.05\linewidth]{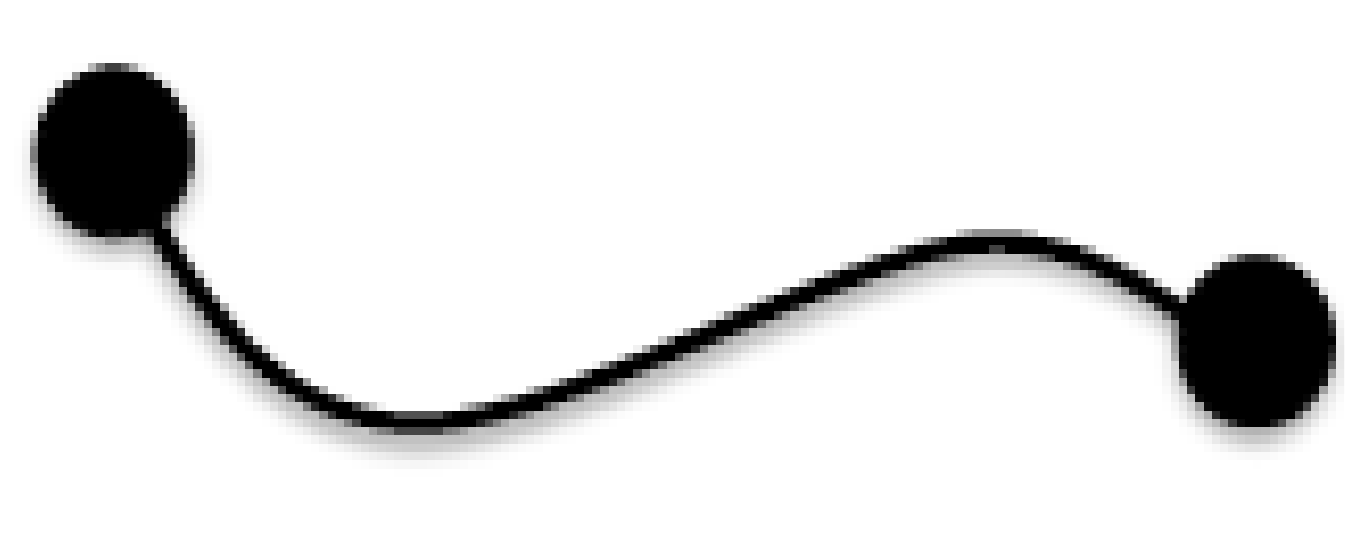}\rangle=1$, \quad but in linkoids we will have more than one long arc in a state, as shown in Equation \ref{hopf_type_bkt_1}. Moreover, even though the states will consist of two long segments, the segments connect different endpoints of the original components, as shown in Equation \ref{hopf_type_bkt_1}. For this reason, even though the Jones polynomial of links only accounts for the number of components in a state, this will not work for linkoids. For example, if we assign the same value to both of the cases in Equation \ref{hopf_type_bkt_1}, say $\left \langle\raisebox{-6pt}{\includegraphics[width=.04\linewidth]{fig/h6.png}} \right \rangle = \left \langle \raisebox{-6pt}{\includegraphics[width=.04\linewidth]{fig/h7.png}} \right \rangle = t, \quad$ then $\left \langle \raisebox{-6pt}{\includegraphics[width=.04\linewidth]{fig/h1.png}} \right \rangle = (A^2d+2+A^{-2})t = (-A^4+A^{-2}+1)t$, which is different than the bracket polynomial of the Hopf link, even if we set $t=d$. A very recent preprint (at the time of writing this manuscript)  provides a definition of the Jones polynomial of linkoids, where the authors propose to  simply introduce new variables, $\lambda_{i j}$ where $i<j$ refer to endpoints of the linkoid, for the different connecting arcs  \cite{Neslihan2022}.  Using that definition however, we get the expression of the bracket polynomial of the linkoid in Figure \ref{oriented_hopf_type} to be equal to $\quad ({-A}^{4}+1) \lambda_{1 3} \lambda_{2 4} + A^{-2} \lambda_{1 2} \lambda_{3 4}$, which is different from that of the Hopf link. Thus, none of the, up to date, proposed definitions satisfy the basic requirement that we set with this example.


In the following we will show that the definition we introduce in this manuscript will be a single variable, well-defined, topological invariant of linkoids, that satisfies the condition that the Jones polynomial of a link-type linkoid is that of the corresponding link (Theorem \ref{thm_linktype_jp}). To do this, we will first set the framework for analyzing states of linkoids in Section \ref{Comb}. This enables a novel definition of the Kauffman bracket and the Jones polynomials, which is given in Section \ref{Jones}. In Section \ref{linktype} we prove Theorem \ref{thm_linktype_jp}.

\subsection{Segment Cycles of a linkoid state}\label{Comb}

In this section we introduce an essential concept that will be necessary for classifying the states of a linkoid diagram, the segment cycles of a state. In the following, let $L$ be a linkoid diagram with $n \in \mathbb{N}$ components and let $G$ be the set containing all the endpoints (heads and legs) of $L$. Recall that, 
a component of a linkoid with $n$ components is denoted 
by $l_{(2j-1,2j)}$ where $j \in \{ 1, 2, \cdots, n\}$. Thus $G=\{ 1, 2, 3, \cdots , 2n\}$.

\begin{definition}(Pairing Combination and Head-Leg pairing) We will call an element $J \in S_{2n}$, which can be expressed as the product of $n$ disjoint $2-cycles$, a pairing combination. In particular, we will call the element $HL = (1\quad 2)(3\quad 4) \cdots (2n-1\quad 2n) = \prod_{i=1}^n (2i-1\quad 2i)\in S_{2n}$, the head-leg pairing. 

Note that any linkoid defines, by the connectivity of its components, a head-leg pairing (see Definition \ref{linkoid}), while a state, $S$, of a linkoid diagram, defines a pairing combination ,$J_S$ (which may, or may not, be a head-leg pairing). 



 


\label{def_pc}
\end{definition}



\begin{definition}(Orbit of an endpoint)
 Given an  endpoint $ a \in G$, and an arbitrary pairing combination, $J$ on $G$, the set $Orb_J(a)$ of $a$ is defined to be the orbit of the composition function $HL \circ J$. Symbolically, $Orb_J(a)$ is given as,

$$ Orb_J(a) = \left\{ x \in G \quad \mid \quad x = (HL\circ J)^{m}(a) \quad , \quad m \in \mathbb{Z}\right\}$$
\label{def_orbit}
\end{definition}

\begin{definition}(Segment Cycle)
Given a pairing combination, $J$ on $G$, the Segment Cycle of an endpoint $a \in G $ is  defined as follows:
$$Seg(a)=Orb_J(a) \sqcup Orb_J(HL(a))$$
where, $\quad Orb_J(a) \quad$ (resp. $Orb_J(HL(a))$) is the orbit of the point $a$ (resp. $HL(a)$) under the action of the pairing combination $J$. Note that for any point $a \in G$, $HL(a)\in G$ always belongs to the same segment cycle. Thus, a segment cycle always contains an even number of elements.

\label{def_seg_cyc}
\end{definition}

\begin{remark}\label{group}
The orbit of an endpoint can be also defined as the equivalence class of the endpoint (as an element of $G$) under the equivalence relation $\mathcal{R}_{\langle HL \circ J \rangle}$, induced by $\langle HL \circ J \rangle$, which is the action of the cyclic group generated by $HL\circ J \in S_{2n}$ on the set $G$. Let $\mathcal{G}=\langle HL \circ J \rangle \times \langle HL \rangle$. The segment cycle of an element $a \in G$ can also be defined as the orbit of $a$ under the action of $\mathcal{G}$ as shown below :
$$Sec(a)=Orb_{\mathcal{G}}(a) := \{ y \in G : \exists \quad (g,h) \in \mathcal{G} : y = (g, h) \star a\}$$ where, $\star$ means the group action such that $(g,h)\star a = g(h(a))$, the image of $a$ with respect to the composition function $g \circ h$. The quotient set $G/\mathcal{R}_\mathcal{G}$, where $\mathcal{R}_{\mathcal{G}}$ is the equivalence relation induced by $\mathcal{G}$, gives the set of all segment cycles. The cardinality of this set gives the total number of distinct segment cycles due to the pairing combination $J$. 

\end{remark}

\begin{proposition}
The number of segment cycles in a state $S$, $|S|_{cyc}$,  is bounded by: $\quad 1 \leq |S|_{cyc} \leq n.$.
\end{proposition}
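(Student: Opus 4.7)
The plan is to extract this bound purely from the partition/counting structure set up in Definition \ref{def_seg_cyc} and Remark \ref{group}, without invoking anything about the diagram $S$ beyond the fact that it induces a pairing combination $J_S$ on the set $G$ of $2n$ endpoints.

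First I would record that the segment cycles of $S$ form a partition of $G$. This is immediate from Remark \ref{group}: the segment cycles are exactly the orbits of the action of the group $\mathcal{G}=\langle HL\circ J_S\rangle\times\langle HL\rangle$ on $G$, and group orbits always partition the underlying set. In particular, since $|G|=2n\geq 2$, there is at least one orbit, which gives the lower bound $|S|_{cyc}\geq 1$.

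For the upper bound $|S|_{cyc}\leq n$, the key step is to show that every segment cycle has even cardinality (in fact at least $2$). The generator $HL=\prod_{i=1}^{n}(2i-1\;\,2i)$ is a fixed-point-free involution on $G$, and it lies in $\mathcal{G}$, so its restriction to any orbit $\mathrm{Sec}(a)$ is again a fixed-point-free involution. Thus $HL$ pairs up the elements of $\mathrm{Sec}(a)$, forcing $|\mathrm{Sec}(a)|$ to be a positive even integer. Consequently
\[
2n \;=\; |G| \;=\; \sum_{\text{distinct } \mathrm{Sec}(a)} |\mathrm{Sec}(a)| \;\geq\; 2\,|S|_{cyc},
\]
which rearranges to $|S|_{cyc}\leq n$.

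I do not foresee a substantive obstacle: both bounds reduce to (i) the orbit-partition principle and (ii) the fact that multiplying by the fixed-point-free involution $HL$ inside $\mathcal{G}$ forces orbits to have even size. The only minor subtlety worth flagging in the write-up is that the formulation $Seg(a)=Orb_J(a)\sqcup Orb_J(HL(a))$ in Definition \ref{def_seg_cyc} should be read as the $\mathcal{G}$-orbit of $a$ (as clarified in Remark \ref{group}); under either reading the cardinality counting is the same, and the bound follows without examining the crossings of $L$ at all.
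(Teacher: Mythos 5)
Your proof is correct, and it actually takes a different — and more complete — route than the paper's. The paper's own argument only examines the two extremal cases: it computes that $J_S = HL$ yields exactly $n$ segment cycles (since $\mathcal{G} = \mathrm{Id} \times \langle HL\rangle$) and that a "single-cycle" $HL\circ J_S$ yields exactly $1$, i.e.\ it exhibits states attaining each bound rather than proving that every state lies between them. You instead prove the inequality for an arbitrary pairing combination: the lower bound from the orbit-partition principle applied to the nonempty set $G$, and the upper bound from the observation that $HL$ is a fixed-point-free involution lying in $\mathcal{G}$, so every orbit is $HL$-stable and hence of even (in particular $\geq 2$) cardinality, giving $2n \geq 2\,|S|_{cyc}$. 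This is exactly the parity fact the paper states informally in Definition \ref{def_seg_cyc} ("a segment cycle always contains an even number of elements") but never deploys in the proposition's proof. Your version is the one that actually establishes the stated bound for all states; the paper's version has the added virtue of showing the bounds are sharp, which your write-up could absorb in one sentence by citing the same two extremal examples.
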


\begin{proof}
If $J_S = HL$ (the trivial pairing combination), it follows from Remark \ref{group} that $\mathcal{G}=\langle HL \circ HL \rangle \times \langle HL \rangle = Id \times \langle HL \rangle$. Thus, $|G/\mathcal{R}_{\mathcal{G}}|=n$. 
If $J_S$ is single cycle, then $\mathcal{G}$ partitions the set $G$
into a single equivalence class. Therefore, $|G/\mathcal{R}_{\mathcal{G}}|=1$.

\end{proof}


\begin{remark}
\label{rem_decor_circ_J}
Let $S$ denote a state of $L$ with associated pairing $J_S$, and let $Seg(a)$ be a segment cycle in $L$, such that $|Seg(a)|=2k$. Then we can represent $Seg(a)$ by a circle decorated with the $2k$ endpoints of $L$ (See Figure \ref{decor1}), following the order in which they appear in the cycle (See Figure \ref{decor1}). Note that the arcs connecting 2 adjacent points in this circle  alternate between the functions, $J$ and $HL$. Any two points connected by $J$ belong to the same component of the state $S$ and any two points connected by $HL$ belong to the same component in $L$. 


\begin{figure}[h]
    \centering
    \includegraphics[scale=0.2]{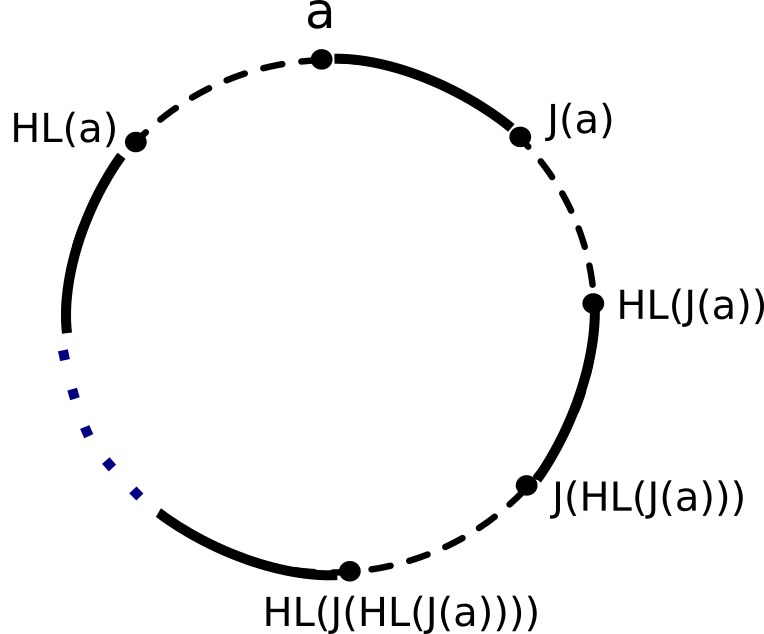}
    \caption{Representation of the segment cycle of $a \in G$ in terms of a decorated circle. Let us consider a circle and let $a \in G$ be the initial point on the circle. The remaining $2k-1$ endpoints  can be uniquely added into the circle in the order $\quad J(a), \quad HL(J(a)),\quad J(HL(J(a))),\quad \cdots ,\quad \text{upto } HL(a)$. Note that the arcs connecting 2 adjacent points in this circle  alternate between the functions, $J$ (solid) and $HL$ (dashed). Any two points connected by $J$ belong to the same component of the state $S$ and any two points connected by $HL$ belong to the same component in $L$. (The dotted line in the figure indicates the continued process of adding points.)}
    \label{decor1}
\end{figure}
\end{remark}




\vspace{20pt}

\noindent \textbf{ Example 1 (cont.) :}

Consider the linkoid diagram and the particular state (say $S$) as shown in Figure \ref{oriented_hopf_type}. In this example, the set $G$ of all endpoints is $\{ 1, 2, 3, 4\}$. Note that the open arc components of a linkoid diagram $L$ define a head-leg pairing, while the states of $L$ can define other pairing combinations. For example, let us denote by $s_1, s_2$ the final states of Equation \ref{hopf_type_bkt_1}. Then $s_1$ defines the pairing combination $J_{s_1}$ , which can be represented by the permutation $(1 \quad 3)(2 \quad 4) \in S_4$ and $s_2$ defines the pairing combination $J_{s_2}$ , which can be represented by the permutation $(1 \quad 2)(3 \quad 4) \in S_4$. The states $s_1$ and $s_2$ are explicitly shown in Figure \ref{js1}. 


For each of the permutations, $J_{s_1}$ and $J_{s_2}$, we can construct the corresponding set of segment cycles. Note that, a segment cycle can be represented as a decorated circle. Corresponding $J_{s_1}$ and $J_{s_2}$, the possible segment cycles are shown in Figure \ref{js1}.
\begin{figure}[h]
\centering
\includegraphics[scale=0.12]{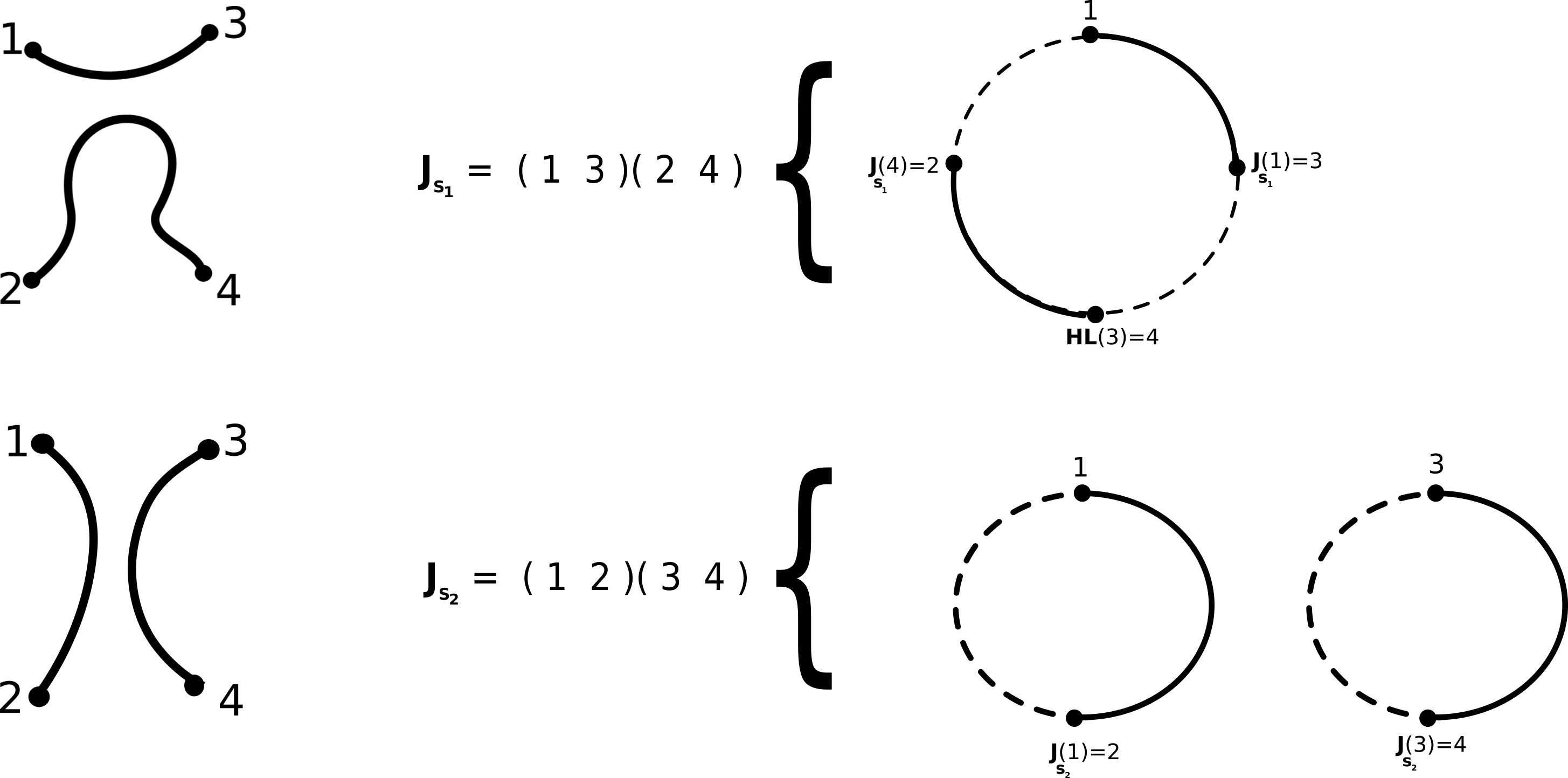}
\caption{(Top) The connections among endpoints due to state $s_1$, corresponding to the pairing combination $J_{s_1}=(1 \quad 3)(2 \quad 4) \in S_4$, and the resultant segment cycle. (Bottom) The connections among endpoints due to state $s_2$, corresponding to the pairing combination $J_{s_2}=(1 \quad 2)(3 \quad 4) \in S_4$, and the resultant distinct segment cycles.}
\label{js1}
\end{figure}

\subsection{The Jones polynomial of Linkoids}\label{Jones}

In the previous section we showed that open segments in a smoothed state of a linkoid can be grouped into segment cycles. We also saw the analogy between segment cycles and decorated circles. In this section, we use these concepts to define the bracket polynomial of linkoids and the Jones polynomial of linkoids (as the normalized bracket polynomial).  

\begin{definition}(Bracket Polynomial of a Linkoid)
Let $L$ be a linkoid diagram with $n$ components. The bracket polynomial of the linkoid is completely characterised by the following Skein relation and initial conditions:

\begin{equation}
\begin{split}
\left\langle\raisebox{-10pt}{\includegraphics[width=.06\linewidth]{fig/cross10.png}}\right\rangle=A\left\langle\raisebox{-10pt}{\includegraphics[width=.06\linewidth]{fig/cross30.png}}\right\rangle+A^{-1}\left\langle\raisebox{-10pt}{\includegraphics[width=.06\linewidth]{fig/cross20.png}}\right\rangle, & \hspace{0.5cm}\left\langle L\cup \bigcirc\right\rangle=(-A^2-A^{-2})\left\langle L\right\rangle,\\\hspace{0.5cm}\left\langle \raisebox{-12pt}{\includegraphics[width=.13\linewidth]{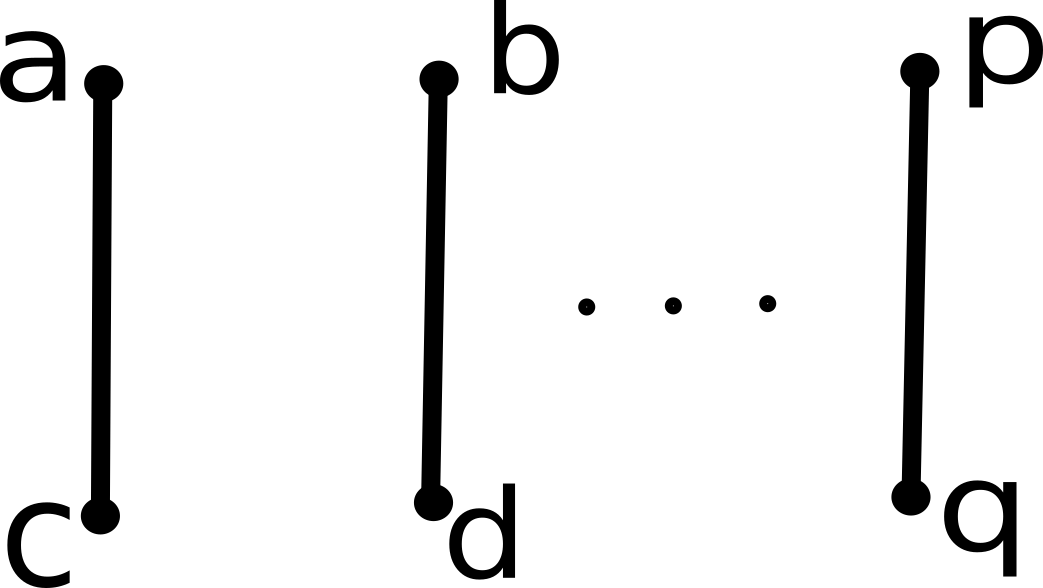}}\right\rangle=(-A^2-A^{-2})^{|cyc|}.& \\
\label{bkt_sk_L}
\end{split}
\end{equation}

\noindent where $|cyc|$ denotes the number of distinct segment cycles.
In other words, $|cyc|=|G/\mathcal{R}_\mathcal{G}|$, where $\mathcal{R}_{\mathcal{G}}$ is the equivalence relation induced on $G$ by $\mathcal{G}=\langle HL \circ J \rangle \times \langle HL \rangle$, where $J$ is the pairing defined by the indices $a, b, c, d, \cdots p, q$.\\
 \vspace{7pt}

\noindent The bracket polynomial of $L$ can be formulated as the following state sum expression :
\begin{equation}
    \langle L \rangle := \sum_S A^{\sigma (S)} d^{|S|_{circ} -1} d^{|S|_{cyc}}\\ 
\end{equation}
where, $S$ is a state corresponding to a choice of smoothing over all double points in $L$; $\sigma(S)$ is the algebraic sum of the smoothing labels of $S$; $|S|_{circ}$ is the number of disjoint circles in $S$ and $|S|_{cyc}$ is the number of distinct segment cycles in $S$. 
\label{def_bkt}
\end{definition}

\noindent The bracket polynomial of linkoids  has the following properties:

\begin{enumerate}
    \item It preserves the underlying Skein relation used in the computation of bracket polynomial of knots and knotoids. 
    \item If $L$ is a link diagram, the novel definition of the bracket polynomial (Definition \ref{def_bkt}) coincides with the traditional Kauffman bracket polynomial of $L$. Indeed, the states of $L$ have no long segments. Therefore $|S|_{cyc} = 0$ and $$\langle L \rangle = \sum_S A^{\sigma (S)} d^{|S|_{circ} -1}.$$
    \item If $L$ is a knotoid diagram, then the novel definition of the bracket polynomial (Definition \ref{def_bkt}) coincides with the Kauffman bracket polynomial of knotoids. Indeed, for a knotoid, $|S|_{cyc}=1$ for all states and
    $$\langle L \rangle = \sum_S A^{\sigma (S)} d^{|S|_{circ} -1}d^1 =  \sum_S A^{\sigma (S)} d^{|S|_{circ}}.$$
    \item For a trivial linkoid with  $n$ components, $$\langle L \rangle = d^{n-1}. $$ Indeed, for a trivial linkoid with  $n$ components, there are no rearrangements in the pairing of endpoints since there are no crossings to be resolved. Therefore there is only 1 state in the state sum which has the original $n$ segments and their endpoints intact and $$\langle L \rangle = A^0 \times d^{0-1} \times d^{n} = d^{n-1}. $$
    \item For link-type linkoids the bracket polynomial coincides with that of the corresponding link upon the closure of endpoints (see Theorem \ref{thm_linktype_jp}).
\end{enumerate}

In the following, the bracket polynomial turns into an invariant for linkoids with a normalization by the writhe giving rise to a definition for the Jones polynomial of linkoids with the substitution of $A=t^{-1/4}$. The writhe,  $Wr(L)$, of an oriented linkoid diagram $L$ is the algebraic sum of signs (positive or negative) of crossings of $L$.

\begin{definition}(Jones Polynomial of a Linkoid)
The normalized bracket polynomial of a linkoid diagram $L$ is defined as $$f_L = (-A^3) ^{-Wr(L)} \langle L \rangle$$
where  $Wr(L)$ is the writhe of the linkoid diagram and $\langle L \rangle$ is as seen in Definition \ref{def_bkt}. This gives the Jones polynomial of a linkoid with the substitution $A=t^{-1/4}$.
\label{def_jones}
\end{definition}

 
\noindent The Jones polynomial of linkoids (See Definition \ref{def_jones}) has the following properties:

\begin{enumerate}
\item The Jones polynomial of a linkoid (See Definition \ref{def_jones}) is a topological invariant of linkoids and it satisfies the Jones polynomial Skein relations :
$$t^{-1}f_{L_+}-t f_{L_-}=(t^{1/2}-t^{-1/2})f_{L_0}.$$
where, the linkoids $L_+, \quad L_-$ and $L_0$ are identical almost everywhere except at one crossing as shown below :
\begin{center}
    \includegraphics[scale=0.12]{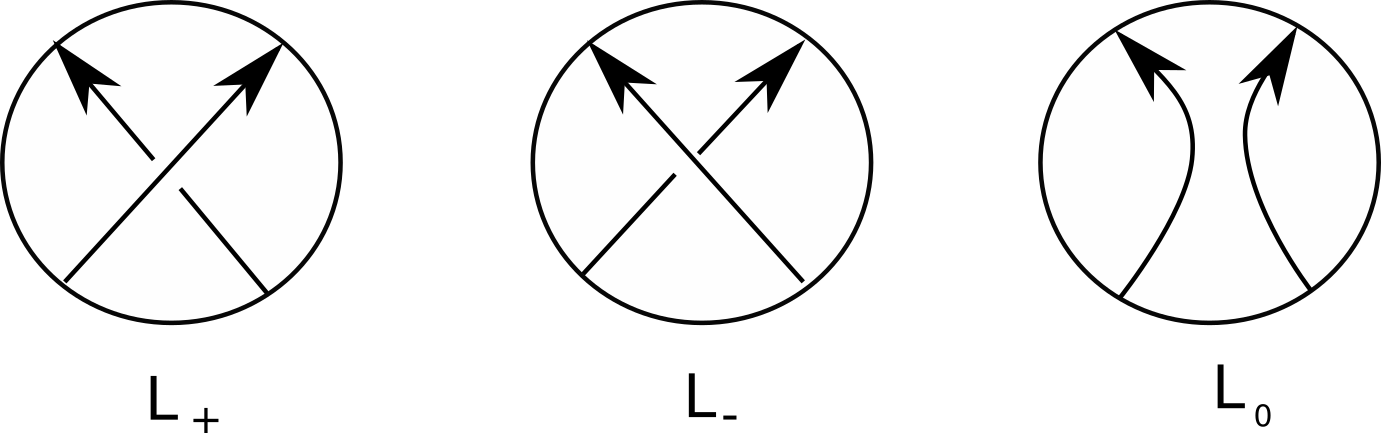}
\end{center}
        \item If $L$ is a link diagram, then the new definition of the Jones polynomial (Definition \ref{def_jones}) gives the traditional Jones polynomial of the link. Indeed, for a link $L$, there is no long segment. Therefore $|S|_{cyc} = 0$ and $$f_L  = (-A^3)^{-Wr(L)}\sum_S A^{\sigma (S)} d^{|S|_{circ} -1} d^0.$$ 
    \item If $L$ is a knotoid diagram, the Jones polynomial of $L$ defined here coincides with the Jones polynomial of knotoids. Indeed, if $L$ has only one component, then $|S|_{cyc}=1$ (only 1 long segment) for all states and $$f_L  = (-A^3)^{-Wr(L)}\sum_S A^{\sigma (S)} d^{|S|_{circ}}.$$
    \item For a trivial linkoid with  $n$ components, $$f_L  = d^{n-1}. $$ Indeed, in that case there are no rearrangements in the pairing of endpoints since there are no crossings to be resolved. Therefore there is only 1 state in the state sum which has the original $n$ segments and their endpoints intact.  Since there are no crossings, the writhe is zero and this allows us to express the Jones polynomial as
    $$f_L  =(-A^3)^0 A^0 \times d^{0-1} \times d^{n} = d^{n-1}. $$
    \item For link-type linkoids the Jones polynomial coincides with that of the corresponding link upon the closure of endpoints (see Theorem \ref{thm_linktype_jp}).
\end{enumerate}
\bigskip

\noindent \textbf{ Example 1 (cont.) :}

Let us return to the example of a linkoid diagram with 2 components as given in Figure \ref{oriented_hopf_type}. Here, the set $G$ of endpoints is equal to $\{ 1, 2, 3, 4\}$. The final step of the state sum expansion of its bracket polynomial (See Equation \ref{hopf_type_bkt_1}) involves the bracket polynomials of the two states $ \quad \raisebox{-8pt}{\includegraphics[width=.05\linewidth]{fig/h6.png}}$ and $\quad \raisebox{-8pt}{\includegraphics[width=.05\linewidth]{fig/h7.png}}$. Let us call them $s_1$ and $s_2$ respectively and their respective pairing combinations as $J_{s_1}$ and $J_{s_2}$. Note that $J_{s_2} = HL = (1 \quad 2)(3 \quad 4)$ implies that there are 2 distinct segment cycles corresponding to $s_2$, namely $Seg(1)=\{1, 2\}$ and $Seg(3)=\{3,4\}$. Whereas, the action of $J_{s_1} = (1 \quad 3)(2 \quad 4)$ on $G$ gives $Seg(1)=\{1,2,3,4\}=G$. Therefore, there is only 1 distinct segment cycle corresponding to $s_1$.

Using Definition \ref{def_bkt}, the final step of Equation \ref{hopf_type_bkt_1} can be simplified as follows :
\begin{equation}
\begin{split}
    \left \langle \raisebox{-12pt}{\includegraphics[width=.06\linewidth]{fig/h1.png}} \right \rangle 
    &=(A^2 d + 2) \left \langle \raisebox{-12pt}{\includegraphics[width=.06\linewidth]{fig/h6.png}} \right \rangle + A^{-2} \left \langle \raisebox{-12pt}{\includegraphics[width=.06\linewidth]{fig/h7.png}} \right \rangle\\
    &=(A^2d+2)d^{0-1}d^1 + A^{-2}d^{0-1}d^{2}\\
    &=-A^{4}-A^{-4}\\
\end{split}
\label{hopf_type_bkt_2}
\end{equation}

Notice that the above expression matches the bracket polynomial for the Hopf link, i.e. \\$\left \langle \raisebox{-5pt}{\includegraphics[width=.04\linewidth]{fig/hc.png}} \right \rangle =-A^4-A^{-4}$. The writhe of the diagram in Figure \ref{oriented_hopf_type} is $Wr(\raisebox{-3pt}{\includegraphics[width=.02\linewidth]{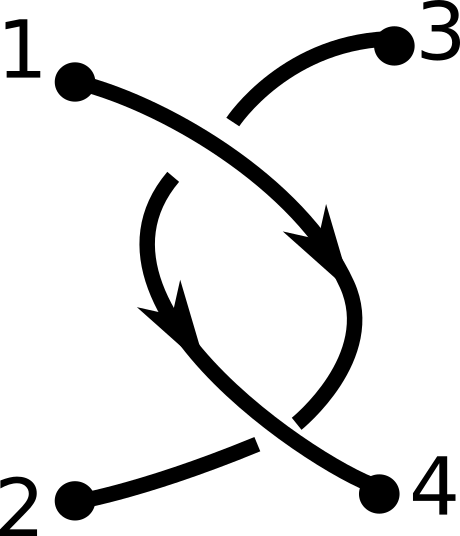}}) = -2$. Therefore, the Jones polynomial of the linkoid is evaluated as follows :
\begin{equation}
    \begin{split}
         f_{\raisebox{-6pt}{\includegraphics[width=.03\linewidth]{fig/h0_dark.png}}} &= (-A^3)^{-Wr(\raisebox{-3pt}{\includegraphics[width=.02\linewidth]{fig/h0_dark.png}})} \left \langle \raisebox{-12pt}{\includegraphics[width=.06\linewidth]{fig/h1.png}} \right \rangle \\
        &=(-A^3)^2 (-A^4-A^{-4})\\
        &=-A^{10}-A^{2}\\
    \end{split}
\end{equation}

\noindent which is equal to that of the Hopf link. In fact, we will prove that this is the case for all link-type linkoids.

\begin{figure}[h]
\centering
(a) \quad \includegraphics[scale=0.15]{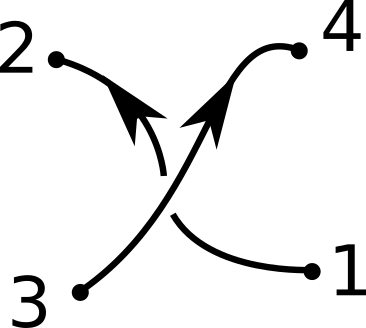}  \quad \quad \quad (b) \includegraphics[scale=0.15]{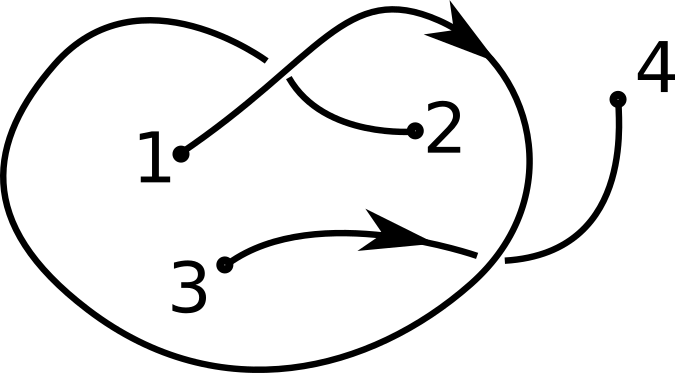}
\caption{Pure linkoids i.e. non-link-type linkoids. Clearly Fig (a) has Writhe = +1 and Fig (b) has Writhe = 0.}
\label{properK}
\end{figure}

The Jones polynomial (Definition \ref{def_jones}) distinguishes non-equivalent linkoids in $S^2$ and for proper linkoids, this measure is not that of any link. For example the Jones polynomial of the linkoid given in Figure \ref{properK} (a) is equal to \quad  $f_{\raisebox{-6pt}{\includegraphics[width=.04\linewidth]{fig/properK2.png}}}=A + A^{-1}$ \quad whereas, the Jones polynomial of the linkoid given in Figure \ref{properK} (b)  is equal to \quad  $f_{\raisebox{-6pt}{\includegraphics[width=.04\linewidth]{fig/properK1.png}}}=-A^{-2} + A^{-4}$.

\subsection{The Jones polynomial of Link-type Linkoids}\label{linktype}

In this section we will prove that the Jones polynomial of link-type linkoids is equal to the Jones polynomial of the corresponding link. 

\begin{theorem}
\label{thm_linktype_jp}
Let $L$ be a link-type linkoid with $n$ components and $L_c$ be the corresponding link (the link that results from connecting the head to the leg of each component in a way that no new crossing is created). Then the Jones polynomials of $L$ and $L_c$ are equal, that is $f_L = f_{L_c}$.
\end{theorem}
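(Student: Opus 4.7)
The plan is to prove the theorem by exhibiting a smoothing-preserving bijection between the states of $L$ and the states of $L_c$ and then comparing the state sums term-by-term. Since $L$ is link-type, Definition~\ref{linktype} provides a diagram of $L$ together with $n$ disjoint closure arcs, one from each head to the corresponding leg, that introduce no new crossings. Fix such a diagram and let $L_c$ be the link diagram obtained by adjoining these arcs. The crossing sets of $L$ and $L_c$ coincide, as do their signs under the induced orientations, so $Wr(L)=Wr(L_c)$; it is therefore enough to prove $\langle L\rangle = \langle L_c \rangle$.

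A choice of smoothing at each crossing of $L$ produces a state $S$ of $L$ together with a state $S'$ of $L_c$ obtained by applying the same smoothings, and this correspondence is a bijection of the state spaces preserving the smoothing exponent $\sigma$. The disjoint closed circles appearing in $S$ also appear unchanged in $S'$, so the equality $\langle L \rangle = \langle L_c \rangle$ reduces to the combinatorial identity
\begin{equation*}
|S'|_{circ} = |S|_{circ} + |S|_{cyc}
\end{equation*}
for every state $S$, i.e.\ that attaching the closure arcs to the long segments of $S$ produces exactly $|S|_{cyc}$ new closed loops in $S'$.

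To verify this I would use the framework of Section~\ref{Comb}. The long segments of $S$ realize the pairing $J_S$ on the endpoint set $G$, while the closure arcs realize the head-leg pairing $HL$. Starting at any endpoint $a\in G$ and traveling through the closed-up diagram, one alternately traverses a long segment (which applies $J_S$) and a closure arc (which applies $HL$), so the closed loop containing $a$ in $S'$ visits precisely the orbit of $a$ under $\langle HL \circ J_S \rangle \times \langle HL \rangle$, namely the segment cycle $Seg(a)$ of Definition~\ref{def_seg_cyc}. Hence distinct segment cycles contribute distinct new loops and every new loop arises in this way, giving the desired count. Substituting into Definition~\ref{def_bkt},
\begin{equation*}
\langle L \rangle = \sum_{S} A^{\sigma(S)} d^{|S|_{circ}-1} d^{|S|_{cyc}} = \sum_{S'} A^{\sigma(S')} d^{|S'|_{circ}-1} = \langle L_c \rangle,
\end{equation*}
and normalizing both sides by $(-A^3)^{-Wr}$ gives $f_L = f_{L_c}$.

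The main obstacle is the circle-counting identity itself: geometrically transparent, it must be argued with care, ensuring that closure arcs and long segments genuinely alternate around each new loop and that no two segment cycles collapse into a single circle. The link-type hypothesis is exactly what rules out intersections of closure arcs with each other or with the smoothed diagram, preserving both the state bijection and the equality of writhes; without it, both the combinatorial identity and the writhe equality would fail, so this hypothesis is used essentially throughout.
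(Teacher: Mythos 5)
Your proposal is correct and follows essentially the same route as the paper: equality of writhes from the crossing-free closure arcs, a smoothing-preserving bijection of states, and the key identity $|S_c|_{circ}=|S|_{circ}+|S|_{cyc}$ established by observing that traversing a closed-up loop alternates between the pairing $J_S$ (long segments) and $HL$ (closure arcs), so each new circle corresponds exactly to one segment cycle. The paper phrases this by decorating the circles of $S_c$ with the endpoint labels and matching decorated circles to segment cycles, but that is the same bijection read in the opposite direction.
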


\begin{proof}
By the definition of $L$ and $L_c$, we notice that $L$ can be created from $L_c$ by omitting $n$ arcs (closure arcs). These arcs connect the endpoints $2j-1$ and $2j$ for each component $l_{(2j-1,2j)}$ of $L$, where $j \in G=\{1, 2, \cdots, n\}$. Let us construct a decoration on $L_c$ by $L$, by keeping track of the endpoints of $L$ with labels. Thus $L_c$ is a union of $L$ with closure arcs. 


The closure arcs in $L$ do not create any new crossings, hence the total number of double points in $L_c$ is the same as that in $L$. Therefore, $Wr(L) = Wr(L_c)$. In the following, we show that the bracket polynomials, $\langle L \rangle$ and $\langle L_c \rangle$ are also equal.

We know that the states of a link or a linkoid diagram are completely determined by the choice of smoothing at the crossings of the diagram.  Since $L$ and $L_c$ have identical crossings, for each state $S$ of the linkoid diagram $L$, there exists a state $S_c$ in the link diagram $L_c$, such that their smoothing labels are equal, that is, $\quad \sigma(S) = \sigma(S_c)$. 
By definition, the contribution of $S$ in the state sum expression of $\quad \langle L \rangle \quad$ is $ \quad A^{\sigma(S)} d^{|S|_{circ}  + |S|_{cyc}-1} \quad$, where $|S|_{circ}$ is the number of disjoint circles and 
$|S|_{cyc}$ is the number of segment cycles formed by the disjoint long segments in the state $S$.  
Similarly, the contribution of $S_c$ in the state sum expression of $\quad \langle L_c \rangle \quad$ is $\quad A^{\sigma(S)} d^{|S_c|_{circ} -1}. \quad$ We will prove that these two terms are equal by showing that $\quad |S|_{circ}+|S|_{cyc}= |S_c|_{circ}$. 


Since the closure arcs of $L$ in $L_c$ do not introduce crossings, the state $S$ can be obtained from  $S_c$, by omitting the closure arcs. Thus, for a decorated $L_c$, $S_c$ will also be decorated with an even number of labels on each circle (since any closure arc has two endpoints and closure arcs are disjoint). Notice that if a circle in $S_c$ is not decorated, then it corresponds to a circle in $S$.
Let us express the total number of circles in $S_c$ as  $|S_c|_{circ} = |S_c|_{(circ,u)} + |S_c|_{(circ,d)} \quad ,$ where $\quad |S_c|_{(circ,u)}$ and $|S_c|_{(circ,d)} \quad$ denote the number of undecorated and decorated circles, respectively. Thus, $\quad |S|_{circ} = |S_c|_{(circ,u)}$. Therefore, we need to prove that $|S|_{cyc}= |S_c|_{(circ,d)}$.



Once all the circles that do not involve endpoints are taken care of in both $S$ and $S_c$, we are left with $n$ long segments in $S$ and $|S_c|_{(circ,d)}$ number of decorated circles in $S_c$. 
Since $S$ is formed by $S_c$ by removing the closure arcs, a decorated circle in $S_c$ with $2k$ labels gives rise to $k$ long segments in $S$. 
We will show that these $k$ long segments form a segment cycle. 

Notice that for a decorated circle in $S_c$, two adjacent labels are either endpoints of a closure arc, or a new connection formed by the smoothings. So, each consecutive pair of labels in a decorated circle in $S_c$ that does not belong to a closure arc, it defines a pairing $J_S$ in $S$ and each consecutive pair of labels in the decorated circle in $S_c$ that belong to the same closure arc, they are related by $HL$. Then the corresponding $k$ long segments in $S$ define a segment cycle. 

These $k$ long segments cannot overlap with any other decorated circle of $S_c$ because it will violate the fact that all the decorated circles of $S_c$ are disjoint. Therefore, for every decorated circle in $S_c$ there is a unique collection of long segments in $S$ such that these long segments form a segment cycle. This implies, $|S_c|_{(circ,d)} = |S|_{cyc}$. Therefore, $\langle L \rangle = \langle L_c \rangle$.  
\end{proof}

\section{The Jones polynomial of Open Curves in 3-space}
\label{open curves}

Consider a collection of $m \in \mathbb{N}$ open or closed curves in 3-space in general position. A (regular) projection of these collection of $n$ curves can give rise to a different linkoid diagram based on the choice of the direction of projection. Notice that with probability one, a projection will be generic. We use the framework introduced in \cite{Panagiotou2020b} and Definition \ref{def_bkt}, to rigorously define the bracket and Jones polynomials of a collection of $n\in \mathbb{N}$ open curves in 3-space. We will define the Jones polynomial as the normalized bracket polynomial.

\begin{definition}\label{jones3}
Let $\mathcal{L}$ denote a collection of $m \in \mathbb{N}$ open curves in 3-space. Let $\mathcal{L}_{\vec{\xi}}$ denote the projection of $\mathcal{L}$ on a plane with normal vector $\vec{\xi}$. The normalised bracket polynomial of $\mathcal{L}$ is defined as:
\begin{equation}
f_{\mathcal{L}}=\frac{1}{4 \pi} \int_{\vec{\xi}\in S^2} (-A^3)^{-Wr(\mathcal{L}_{\vec{\xi}})}\langle \mathcal{L}_{\vec{\xi}} \rangle dS.
    \label{jones_open_curves}
\end{equation}
where, each $\mathcal{L}_{\vec{\xi}}$ is a linkoid diagram and its bracket polynomial can be calculated by using Definition \ref{def_bkt}. Note that the integral is taken over all vectors $\vec{\xi} \in S^2$ except a set of measure zero (corresponding to the irregular projections). This gives the Jones polynomial of a collection of open curves in 3-space with the substitution $A=t^{-1/4}$.
\end{definition}

This new definition of the Jones polynomial of collections of open or closed curves in 3-space generalizes all the previous definitions of the Jones polynomial, so that it satisfies the following properties:

\begin{enumerate}
    \item The Jones polynomial defined by Equation \ref{jones_open_curves} does not depend on any particular projection of the collection of open or closed curves.
    \item For a collection of open curves this polynomial is not the polynomial of a corresponding/approximating
link, nor that of a corresponding/approximating linkoid.
    \item The Jones polynomial of a collection of open curves in $3-$space has real coefficients. It is not a topological invariant, but it is a continuous function of the curve coordinates (see Proposition \ref{cont}).
    \item For a collection of closed curves in $3-$space (a link), the Jones polynomial defined in Equation \ref{jones_open_curves} gives the traditional Jones polynomial and it can be computed from a single projection, i.e. \quad $f_{\mathcal{L}}=f_{\mathcal{L}_{\vec{\xi}}}$ \quad where, $\vec{\xi} \in S^2$ is any projection vector.
    \item As the endpoints of a collection of open curves in 3-space tend to coincide, the Jones polynomial tends to that of the corresponding link.
    \item For a linkoid of 1 component (a knotoid), the Jones polynomial of Definition \ref{jones3} gives the Jones polynomial defined in \cite{Panagiotou2020b}.
\end{enumerate}

In the case of polygonal curves in 3-space, the Jones polynomial attains a simpler expression. Without loss of generality, suppose that all the curves have $n$ edges each. Then there exists a finite number (Say $k \in \mathbb{N}$) of distinct linkoid types ($L_i$)  which may occur in any projection of the collection of open curves, $\mathcal{L}$. Therefore, Equation \ref{bkt_open_curves} can also be expressed as the following finite sum:

\begin{equation}
f_{\mathcal{L}} = \sum_{i=1}^k p_i f_{L_i}
    \label{finte_sum_bkt}
\end{equation}

\noindent where $p_i$ denotes the geometric probability that a projection of $\mathcal{L}$ gives the linkoid $L_i$.



\begin{proposition}\label{cont}
Let $\mathcal{L}$ denote a collection of simple open curves in 3-space. Then $f_{\mathcal{L}}$ is a continuous function of the co-ordinates of $\mathcal{L}$.

\begin{proof}
Let us approximate $\mathcal{L}$ by a set of polygonal curves of $n$ edges each, we denote $\mathcal{L}^{(n)}$. 
Then 

\begin{equation}
f_{\mathcal{L}^{(n)}} = \sum_{i=1}^k p_i f_{L^{(n)}_i}
    \label{finte_sum_bkt}
\end{equation}

\noindent where $L^{(n)}_i, i=1, \dotsc,k$ are the possible linkoids that can occur in all projections of $\mathcal{L}^{(n)}$ and $p_i$ the corresponding geometric probabilities. 
The geometric probability $p_i$ can be expressed as 
$$p_i = \frac{2A_0}{4\pi}$$
where $A_0$ is the Area on the sphere corresponding to vectors $\vec{\xi} \in S^2$ such that the projection of $\mathcal{L}^{(n)}$ along such vectors results in the linkoid $L_i^{(n)}$. $A_0$ is a quadrangle bounded by great circles defined by the edges and vertices of the polygonal curves in $\mathcal{L}^{(n)}$. Thus it is a continuous function of the coordinates of $L^{(n)}$ (see proof of Lemma 3.1 in \cite{Panagiotou2020b}).
The result follows as $n$ goes to infinity.
\end{proof}
\end{proposition}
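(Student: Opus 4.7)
The plan is to reduce to the polygonal case via approximation, exploit the finite sum representation (\ref{finte_sum_bkt}), and reduce continuity of $f_{\mathcal{L}^{(n)}}$ to continuity of the geometric probabilities $p_i$. First I would fix $n$ and consider a polygonal collection $\mathcal{L}^{(n)}$ with $n$ edges per component. By (\ref{finte_sum_bkt}), $f_{\mathcal{L}^{(n)}} = \sum_{i=1}^{k} p_i f_{L_i^{(n)}}$ where $k$ is finite and each $f_{L_i^{(n)}}$ is a fixed polynomial in $t$; hence continuity of $f_{\mathcal{L}^{(n)}}$ in the vertex coordinates reduces to continuity of each $p_i$.

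The sphere $S^2$ of projection directions decomposes into finitely many regions, one per occurring linkoid type. Two adjacent regions are separated by critical directions $\vec{\xi}$ along which the projection fails to be generic, i.e.\ an edge-edge crossing appears or disappears, an endpoint falls on another strand, or three edges project concurrently. For polygonal curves, each such critical direction lies on a great circle whose defining plane is determined by a pair of edges or by an edge and a vertex. The corresponding great-circle arcs, and hence the spherical areas bounding each region, depend continuously (in fact smoothly away from degeneracies) on the vertex coordinates; this is exactly the computation invoked in Lemma 3.1 of \cite{Panagiotou2020b}. Therefore each $p_i$, and hence $f_{\mathcal{L}^{(n)}}$, is continuous in the coordinates of $\mathcal{L}^{(n)}$.

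To extend the result to a general collection of simple open curves $\mathcal{L}$, I would fix $\mathcal{L}_0$ and use uniform polygonal refinements $\mathcal{L}_0^{(n)} \to \mathcal{L}_0$ (and analogously $\mathcal{L}^{(n)} \to \mathcal{L}$ for nearby $\mathcal{L}$). A three-epsilon estimate
\begin{equation*}
|f_{\mathcal{L}} - f_{\mathcal{L}_0}| \leq |f_{\mathcal{L}} - f_{\mathcal{L}^{(n)}}| + |f_{\mathcal{L}^{(n)}} - f_{\mathcal{L}_0^{(n)}}| + |f_{\mathcal{L}_0^{(n)}} - f_{\mathcal{L}_0}|
\end{equation*}
controls the middle term by the polygonal continuity just established, and the outer terms by the convergence of the polygonal approximations.

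The main obstacle I expect is precisely the last step: one must ensure that $f_{\mathcal{L}^{(n)}} \to f_{\mathcal{L}}$ uniformly across a coordinate neighbourhood of $\mathcal{L}_0$, so that the two outer terms can be made small independently of $\mathcal{L}$. The key observation here is that, at any generic $\vec{\xi} \in S^2$, a sufficiently small perturbation of the underlying curves (including polygonal refinement) does not alter the isotopy class of the resulting linkoid diagram, and therefore leaves $(-A^3)^{-Wr(\mathcal{L}_{\vec{\xi}})} \langle \mathcal{L}_{\vec{\xi}} \rangle$ unchanged. Since the set of non-generic $\vec{\xi}$ has measure zero on $S^2$ and the integrand in (\ref{jones_open_curves}) is uniformly bounded in $A$ on a compact family of nearby curves, dominated convergence applied to the integral over $S^2$ yields the required uniform estimate and completes the argument.
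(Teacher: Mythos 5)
Your proposal follows essentially the same route as the paper: reduce to polygonal approximations, use the finite decomposition $f_{\mathcal{L}^{(n)}}=\sum_i p_i f_{L_i^{(n)}}$ with fixed polynomials $f_{L_i^{(n)}}$, and derive continuity of each geometric probability $p_i$ from the fact that the regions of $S^2$ yielding a fixed linkoid type are bounded by great circles depending continuously on the vertex coordinates. The one place you go beyond the paper is the passage to the limit $n\to\infty$: the paper simply states that ``the result follows as $n$ goes to infinity,'' whereas you correctly observe that a pointwise limit of continuous functions need not be continuous, set up the three-epsilon estimate, and sketch the needed locally uniform convergence via stability of the linkoid type at generic directions together with dominated convergence; this is a genuine refinement of the written argument, though a fully rigorous treatment would still need to verify the dominating bound (and the uniformity of the outer terms over a neighbourhood of $\mathcal{L}_0$) for non-polygonal curves.
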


\begin{corollary}
Let $L$ denote a collection of open curves in 3-space. As the endpoints of the curves tend to coincide to form a link $L_c$, $f_{\mathcal{L}}$ tends to $f_{\mathcal{L}_c}$
\end{corollary}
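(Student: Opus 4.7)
The plan is to apply Theorem \ref{thm_linktype_jp} pointwise in the projection direction and then pass to the limit in the integral defining $f_{\mathcal{L}}$ by dominated convergence. Let $\mathcal{L}^{(\epsilon)}$ denote a one-parameter family of open-curve configurations in which each head-to-leg pair of endpoints to be identified sits within distance $\epsilon$ of its common limit point, so that $\mathcal{L}^{(\epsilon)}\to\mathcal{L}_c$ as $\epsilon\to 0$. Since $\mathcal{L}_c$ is an honest link, $f_{\mathcal{L}_c}$ is the classical Jones polynomial, which is a topological invariant and can be computed from any single regular projection; in particular $f_{\mathcal{L}_c}=f_{(\mathcal{L}_c)_{\vec{\xi}}}$ for almost every $\vec{\xi}\in S^2$, so the integrand that defines $f_{\mathcal{L}_c}$ through Equation \ref{jones_open_curves} is a.e. the constant polynomial $f_{\mathcal{L}_c}$.

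Next, I would fix a regular projection direction $\vec{\xi}$ (this excludes only a measure-zero set of directions for $\mathcal{L}_c$) and choose $\epsilon_0(\vec{\xi})>0$ small enough that for every $\epsilon\le\epsilon_0$ each pair of endpoints of $\mathcal{L}^{(\epsilon)}$ to be identified projects to two nearby planar points joined by a short straight segment disjoint from every other strand of $(\mathcal{L}^{(\epsilon)})_{\vec{\xi}}$. Then $(\mathcal{L}^{(\epsilon)})_{\vec{\xi}}$ is a link-type linkoid, these short planar segments being valid closure arcs in the sense of Definition \ref{linktype}. By Theorem \ref{thm_linktype_jp}, the Jones polynomial of $(\mathcal{L}^{(\epsilon)})_{\vec{\xi}}$ equals that of its closure, which is planar-isotopic to $(\mathcal{L}_c)_{\vec{\xi}}$ once $\epsilon$ is small enough, and therefore equals $f_{\mathcal{L}_c}$. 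Hence
$$(-A^{3})^{-Wr\bigl((\mathcal{L}^{(\epsilon)})_{\vec{\xi}}\bigr)}\bigl\langle(\mathcal{L}^{(\epsilon)})_{\vec{\xi}}\bigr\rangle\;\longrightarrow\;f_{\mathcal{L}_c}$$
as $\epsilon\to 0$, pointwise for almost every $\vec{\xi}\in S^2$.

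The main obstacle is to interchange limit and integral cleanly. I would argue this as follows. Let $E_\epsilon\subset S^2$ be the set of directions along which at least one of the short planar gaps fails to be disjoint from some other strand. Since every such gap has length $O(\epsilon)$, the set of projection directions on $S^2$ for which a given pair of endpoints projects onto some given strand has measure $O(\epsilon)$, and so $|E_\epsilon|=O(\epsilon)$. Outside $E_\epsilon$ the integrand in Equation \ref{jones_open_curves} is identically $f_{\mathcal{L}_c}$ by the step above. Inside $E_\epsilon$ the integrand is uniformly bounded coefficient-wise, because for a bounded configuration the number of crossings in any regular projection is uniformly bounded, which in turn controls both $|Wr|$ and the degree and coefficients of the bracket. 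Dominated convergence applied coefficient by coefficient then gives
$$f_{\mathcal{L}^{(\epsilon)}}=\frac{1}{4\pi}\int_{S^2\setminus E_\epsilon}f_{\mathcal{L}_c}\,dS+\frac{1}{4\pi}\int_{E_\epsilon}\bigl(-A^{3}\bigr)^{-Wr}\langle\cdot\rangle\,dS\;\longrightarrow\;f_{\mathcal{L}_c}$$
as $\epsilon\to 0$, which is the desired conclusion.
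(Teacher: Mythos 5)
Your argument is correct and follows essentially the same route as the paper: apply Theorem \ref{thm_linktype_jp} projection-by-projection so that the integrand in Equation \ref{jones_open_curves} converges a.e.\ to $f_{\mathcal{L}_c}$, then pass to the limit in the integral. The paper's own proof is a one-line citation of Proposition \ref{cont}, Definition \ref{jones3} and Theorem \ref{thm_linktype_jp}; your write-up supplies the pointwise link-type identification, the exceptional-set estimate, and the dominated-convergence step that the citation leaves implicit.
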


\begin{proof}
The result follows by Proposition \ref{cont}, Definition \ref{jones3} and Theorem \ref{thm_linktype_jp}.
\end{proof}

The statement below follows as a corollary of the properties of open and closed curves in 3-space that we have established so far.

\begin{corollary}
The Jones polynomial is a continuous function in the space of all simple curves (open or closed) in 3-space.
\end{corollary}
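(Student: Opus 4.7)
The plan is to assemble three ingredients that have already been established in the paper: (i) Proposition \ref{cont}, which gives continuity of $f_{\mathcal{L}}$ throughout the space of collections of simple open curves; (ii) the classical fact that for links the Jones polynomial is a topological invariant, so it is constant on isotopy classes and in particular continuous on the space of simple closed curves under any reasonable topology on embeddings; and (iii) the preceding corollary, which bridges the open and closed regimes by showing that as the endpoints of a collection of open curves coalesce, $f_{\mathcal{L}}$ tends to $f_{\mathcal{L}_c}$ of the resulting link.

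First I would fix a topology on the space of simple curves in 3-space that treats open and closed curves uniformly, for instance parameterized embeddings $\gamma_i : [0,1] \to \mathbb{R}^3$ (one per component) with the $C^0$ topology, where closed components are those satisfying $\gamma_i(0) = \gamma_i(1)$. In this model, closed curves sit as a closed subspace of the ambient space of open curves, and convergence of a sequence of open curves to a closed configuration is exactly the situation in which endpoints of each component coalesce.

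Next I would split the proof into cases according to the limiting configuration. If the limit lies in the strictly open stratum (all $2n$ endpoints distinct), Proposition \ref{cont} already delivers continuity in a neighborhood. If the limit lies in the strictly closed stratum and is approached by closed curves, topological invariance of the Jones polynomial gives continuity immediately, since sufficiently $C^0$-small perturbations of a link preserve its isotopy class. The only non-trivial case is a closed limit approached by a sequence containing open curves: here I would combine Proposition \ref{cont}, applied to the continuous dependence of $f$ on the coordinates in the open stratum, with the previous corollary to conclude that the limiting value agrees with the Jones polynomial of the limit link.

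The main potential obstacle is precisely this gluing at the interface between the open and closed strata. Continuity inside each stratum is essentially automatic from (i) and (ii), but one must check that the two continuity regimes assign the same value on the overlap. This is exactly where Theorem \ref{thm_linktype_jp} is essential: because the Jones polynomial of a link-type linkoid coincides with that of its closure, the probability-weighted average over projections in Definition \ref{jones3} passes to the correct limit and the values match on both sides of the interface. Once this is invoked, no separate analytic argument is required beyond citing the earlier corollary.
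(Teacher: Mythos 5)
Your proposal is correct and takes essentially the same route as the paper, which presents this corollary with no separate argument beyond noting that it follows from the previously established properties, namely Proposition \ref{cont}, the preceding corollary on coalescing endpoints, and the topological invariance of the Jones polynomial for links via Theorem \ref{thm_linktype_jp}. Your stratified case analysis is simply a more explicit write-up of that same assembly of ingredients.
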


In a similar way, we can define the Kauffman bracket polynomial of a collection of open curves in 3-space, as follows:

\begin{definition}\label{bracket3d}
Let $\mathcal{L}$ denote a collection of $n \in \mathbb{N}$ open curves in 3-space. Let $\mathcal{L}_{\vec{\xi}}$ denote the projection of $\mathcal{L}$ on a plane with normal vector $\vec{\xi}$. The bracket polynomial of $\mathcal{L}$ is defined as:
 \begin{equation}
 \langle \mathcal{L} \rangle = \frac{1}{4\pi}\int_{\vec{\xi}\in S^2} \langle \mathcal{L}_{\vec{\xi}} \rangle dS
    \label{bkt_open_curves}
\end{equation}
where, each $\mathcal{L}_{\vec{\xi}}$ is a linkoid diagram and its bracket polynomial can be calculated by using Definition \ref{def_bkt}. Note that the integral is taken over all vectors $\vec{\xi} \in S^2$ except a set of measure zero (corresponding to the irregular projections).
\end{definition}

\noindent Properties of the bracket polynomial of collections of open curves in 3-space :

\begin{enumerate}
    \item The bracket polynomial defined in Equation \ref{bkt_open_curves} does not depend on any particular projection of the collection of curves.
    \item For an open curve this polynomial is not the polynomial of a corresponding/approximating
link, nor that of a corresponding/approximating linkoid.
    \item The bracket polynomial defined in Equation \ref{bkt_open_curves} is not a topological invariant, but it is a continuous function of the curve coordinates for both open and closed curves in 3-space.
    \item As the endpoints of a collection of open curves in 3-space tend to coincide, the bracket polynomial tends to that of the corresponding link.
    \item For a linkoid of one component, the bracket polynomial of Definition \ref{bracket3d} gives the bracket polynomial defined in \cite{Panagiotou2020b}.

\end{enumerate}

\noindent \textbf{ Example 2 :}

Consider a set of open borromean rings realised in 3-space by the following three lists of co-ordinates : 

\begin{verbatim}
   R = [[0,0,0],[1,1,0],[2,2,0.5],[3,3,0.5],[4,4,0],[5,5,0],[6,6,0.5],
   [7,7,0.5],[8,7,0.5], [9,5,0.2],[9,3,0.2],[8,0,0.2],[8,-1,0.2],
   [6,-1.5,0],[4,-2,0],[2,-1.5,0]]
   
   B = [[1,0,0.5],[4,0,0],[5,1,0],[5,4,0.5],[4,5,0.5],[3,6,0],
   [2,7,0],[-1,6,0],[-1,3,0.5]]
    
   K = [[6,0,0.5],[7,6,0],[6,7,0],[3,7,0.5],[2,6,0.5],[2,3,0],
   [3,2,0],[4,1,0.5]]    
\end{verbatim}

\noindent where {\tt R, B} and {\tt K} denote the red, blue and black curves, respectively. The list \quad {\tt R} \quad can be updated by an additional element (say $\vec{r}$) by using the following parametrization :

$$\vec{r}=\vec{r_1}+ s (\vec{r_2}-\vec{r_1})$$

\noindent where, \quad $0\leq s \leq 1$ \quad and $\vec{r_1}$ and $\vec{r_2}$ are  respectively the the last and the first points in \quad {\tt R} \quad i.e. $\vec{r_1}=\text{{\tt R}}[-1]$ and $\vec{r_2}=\text{{\tt R}}[0]$. Using the same parameter $s$, the lists {\tt B} and {\tt K} can each be updated by an additional element. 

Let us denote a configuration of the system of \textit{open borromean rings} by $w(s)$, where $s$ is the value of the parameter in concern. Clearly, the initial configuration of the system of \textit{open borromean rings} can thus be denoted as $w(0)$. As we start updating the lists {\tt R, B} and {\tt K} by the above parametrization, it means that the endpoints of each component of $w(0)$ move closer and closer in time, eventually attaining the configuration of the closed borromean rings, namely $w(1)$. During this journey from $w(0)$ to $w(1)$, we encounter infinitely many intermediate configurations, some of which are : $w(0.22)$, $w(0.44)$, $w(0.67)$, $w(0.68)$, $w(0.70)$, and $w(0.89)$. The Jones polyomials of the collection of curves from the initial to the final stage, along with the aforementioned intermediate steps are presented in Figure \ref{borr_in_time} and their explicit expressions are given as follows:

\begin{figure}
    \centering
    \includegraphics[scale=0.18]{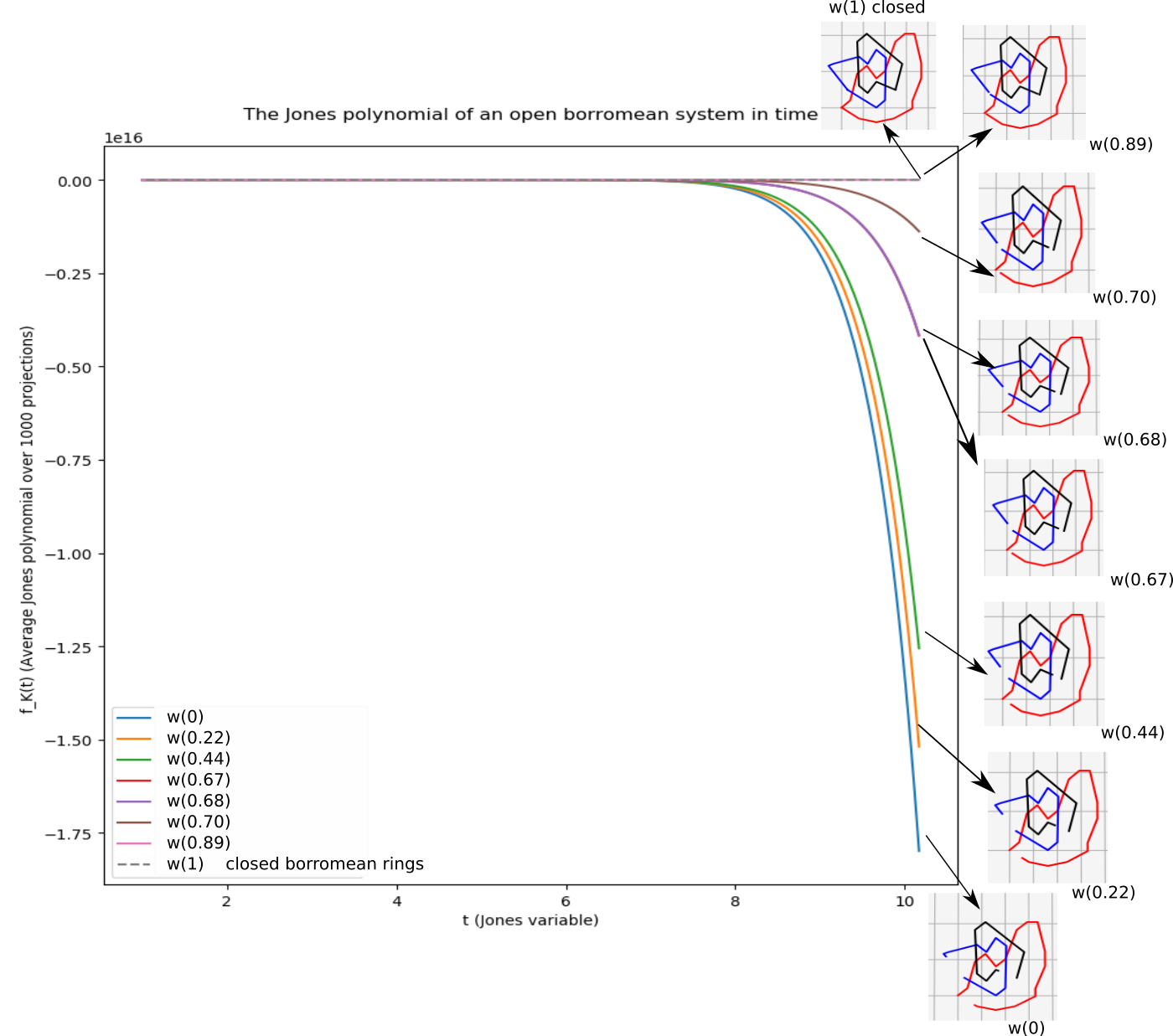}
    \caption{The Jones polynomial of a system of open borromean rings in 3-space as the end points move closer in time to ultimately give rise to the closed borromean rings in 3-space. The coefficients of the Jones polynomial are a continuous function of the chain coordinates.}
    \label{borr_in_time}
\end{figure}

\newpage 

\begin{tabular}{c c c}
     $f_{w(0)}$ &= & $ -0.26 t^{-3} +1.49 t^{-2} + 1.84 t^{-3/2} + 0.16 t^{-1} -0.38 t^{-1/2}$\\
     & & + $0.72 + 0.67 t^{1/2} -0.18 t -0.22 t^{3/2} +0.22 t^2 -0.07 t^3$\\
     \\
     $f_{w(0.22)}$ & =&  $ -0.59 t^{-3} +2.15 t^{-2} + 1.83 t^{-3/2} -0.81 t^{-1} -1.02 t^{-1/2}$\\
     & & + $1.39 + 1.59 t^{1/2} -0.21 t -0.53 t^{3/2} -0.27 t^2 -0.08 t^3$\\
     \\
     $f_{w(0.44)}$ & =&  $ -0.92 t^{-3} +2.83 t^{-2} + 1.67 t^{-3/2} -1.86 t^{-1} -1.53 t^{-1/2}$\\
     & & + $2.27 + 2.39 t^{1/2} -0.45 t -0.85 t^{3/2} +0.53 t^2 -0.16 t^3$\\
     \\
     $f_{w(0.67)}$ &= & $ -0.98 t^{-3} +2.95 t^{-2} + 1.45 t^{-3/2} -2.04 t^{-1} -1.4 t^{-1/2}$\\
     & & + $2.60 + 2.22 t^{1/2} -0.68 t -0.78 t^{3/2} +0.86 t^2 -0.27 t^3$\\
     \\
     $f_{w(0.68)}$ & =& $ -0.98 t^{-3} +2.94 t^{-2} + 1.43 t^{-3/2} -2.02 t^{-1} -1.39 t^{-1/2}$\\
     & & + $2.60 + 2.19 t^{1/2} -0.68 t -0.77 t^{3/2} +0.88 t^2 -0.28 t^3$\\
     \\
     $f_{w(0.70)}$ & =& $ -0.98 t^{-3} +2.96 t^{-2} + 1.38 t^{-3/2} -2.04 t^{-1} -1.34 t^{-1/2}$\\
     & & + $2.66 + 2.11 t^{1/2} -0.74 t -0.75 t^{3/2} +0.96 t^2 -0.31 t^3$\\
     \\
     $f_{w(0.89)}$ &= & $ -0.99 t^{-3} +2.98 t^{-2} + 0.18 t^{-3/2} -2.06 t^{-1} -0.17 t^{-1/2}$\\
     & & + $3.86 + 0.35 t^{1/2} -1.9 t -0.15 t^{3/2} +2.74 t^2 -0.9 t^3$\\
     \\
     $f_{w(1)}$ &= & $ -t^{-3} + 3 t^{-2} -2 t^{-1} + 4 -2 t  + 3 t^2 -t^3$\\
     \\
\end{tabular}

Notice that $f_{w(0)}$ is a new polynomial representing the particular configuration of the open Borromean ring in 3-space. This is a polynomial with real coefficients, while $f_{w(1)}$ is the integer polynomial invariant of the Borromean ring. We notice that, as the endpoints of the open link tend to coincide, the coefficients of the powers of $t$ that compose the Borromean ring tend to their corresponding integer values, while the coefficients of the powers of $t$ that are not part of the Borromean ring, tend to zero.

\section{Conclusions}
\label{conclusion}

In this work we introduced the first measure of topological complexity of collections of open curves in 3-space, based on a novel Jones polynomial. The classical Jones polynomial is a special case of this novel Jones polynomial. For collections of open curves in 3-space, the novel Jones polynomial is a polynomial with real coefficients, which are continuous functions of the curve coordinates and as the endpoints of the curves tend to coincide, it tends to the integer coefficient, Jones polynomial invariant of the resulting link.

The definition of the Jones polynomial of open curves in 3-space is based on a novel definition of the Jones polynomial of linkoids that we introduced in this manuscript as well. This novel Jones polynomial of linkoids is the only such definition that satisfies the basic property that the polynomial of a link-type linkoid is that of corresponding link. This polynomial thus generalizes the Jones polynomial of knotoids, while maintaining its properties. This new definition of the Jones polynomial of linkoids, will enable to properly define other invariants of linkoids as well in the future.

We demonstrated with numerical examples how the novel Jones polynomial of open curves in 3-space can be useful in practice to characterize multi-chain complexity for the first time. This enables the rigorous characterization of multi-chain entanglement in many physical systems obtained either from experiments or simulations, such as polymers and biopolymers, where entanglement is arguably an important factor of mechanics and function, which has been elusive.


\section{Acknowledgements}
 Kasturi Barkataki and Eleni Panagiotou were supported by NSF (Grant No. DMS-1913180 and NSF CAREER 2047587).

\bibliographystyle{plain}
\bibliography{paperDatabase}

\end{document}